\newenvironment{proof}{\noindent{\sc Proof.}}{\qed}
\newtheorem{theorem}{Theorem}[section]
\newtheorem{lemma}{Lemma}[section]
\newtheorem{cor}{Corollary}[section]
\newtheorem{rem}{Remark}[section]
\newtheorem{definition}{Definition}[section]
\newtheorem{prop}{Proposition}[section]
\newtheorem{uda}{Example}[section]
\newcommand{\qed}{$\blacksquare$}
\def\hindu{\arabic}
\renewcommand{\theequation}{\hindu{section}.\hindu{equation}}
\providecommand{\argmin}{\operatorname*{arg\ min}}
\newcommand\gattha[2]{\genfrac{}{}{0pt}{}{#1}{#2}}
\def\RR{{\mathbb R}}
\def\CC{{\mathbb C}}
\def\ZZ{{\mathbb Z}}
\def\C{{\mathcal C}}
\def\argmin{\mathop{\hbox{\textrm{arg min}}}}
\def\be{\begin{equation}}
\def\ee{\end{equation}}
\def\bea{\begin{eqnarray}}
\def\eea{\end{eqnarray}}
\def\disp{\displaystyle}
\def\donchitre#1#2{\vskip 6.5cm\noindent
\parbox[t]{1in}{\special{eps:#1.eps x=6.5cm y=5.5cm}}
\hbox to 7cm{}\parbox[t]{0.0cm}{\special{eps:#2.eps x=6.5cm y=5.5cm}}}
\def\gs{\gtrsim}
\def\ls{\lesssim}
\title{An Eigenfunction Approach to Conversion of the Laplace Transform of Point Masses on the Real Line to the Fourier Domain}
\author{Michael Mckenna$^1$,
 Hrushikesh~N.~Mhaskar$^2$
 \thanks{ Institute of Mathematical Sciences, Claremont Graduate University, Claremont, CA 91711 (United States).The research of HNM is supported in part by NSF grant DMS 2012355 and ONR grants N00014-23-1-2394, N00014-23-1-2790. The research of MM and RGS was funded in part by the National Institute on Aging, Intramural Research Program.
\textsf{email:} hrushikesh.mhaskar@cgu.edu.
}\ ,
Richard G. Spencer$^1$}
\begin{document}
\date{%
    $^1$ National Institute on Aging, National Institutes of Health, Laboratory of Clinical Investigation, Magnetic Resonance Imaging and Spectroscopy Section, Baltimore MD 21224\\%
    $^2$Institute of Mathematical Sciences, Claremont Graduate University, Claremont, CA 91711 U.S.A.\\[2ex]%
%\insertdate{September 9,}{ 2024}
}

\maketitle
%\textcolor{red}{Latest from MM Jan 28 2024}
\begin{abstract}
Motivated by applications in magnetic resonance relaxometry, we consider the following problem: Given samples of a function $t\mapsto \sum_{k=1}^K A_k\exp(-t\lambda_k)$, where $K\ge 2$ is an integer, $A_k\in\mathbb{R}$, $\lambda_k>0$ for $k=1,\cdots, K$, determine $K$, $A_k$'s and $\lambda_k$'s.
Unlike the case in which the $\lambda_k$'s are purely imaginary, this problem is notoriously ill-posed. Our goal is to show that this problem can be transformed into an equivalent one in which the $\lambda_k$'s are replaced by $i\lambda_k$. 
We show that this may be accomplished by approximation in terms of Hermite functions, and using the fact that these functions are eigenfunctions of the Fourier transform. We present a preliminary numerical exploration of parameter extraction from this formalism, including the effect of noise. We do not claim to have eliminated the inherent ill-posedness of the original problem, as reflected in the numerical results.
\end{abstract}

\section{Introduction}\label{sec:intro}

% Mathematically, we are given finitely many observations of a signal of the form
% \be\label{eq:observations}
% F_0(t)=\sum_{k=1}^K A_k\exp(-t\lambda_k) +\epsilon_0(t), \qquad 0\le t\le T,
% \ee
% where $T>0$, $K$ is a positive integer, $a_k,\lambda_k>0$ for $k=1,\cdots, K$, and $\epsilon_0(t)$'s are i.i.d. random variables with mean $0$ and some additional properties to be described later. The goal of this work is to define a procedure by which Eqn (1.1) is converted to an equivalent Fourier problem, from which the $a_k$ and $\lambda_k$ may be derived in a straightforward fashion.

% \section{New Introduction}\label{sec:n_intro}
Multiexponential decay signals, consisting of the superposition of two or more exponentials of the form:
\be\label{eq:observations}
F_0(t)=\sum_{k=1}^K A_k\exp(-t\lambda_k) +\epsilon_0(t), \qquad 0\le t\le T,
\ee
where $T>0$, $K$ is a positive integer, $A_k,\lambda_k>0$ for $k=1,\cdots, K$, and $\epsilon_0(t)$'s are i.i.d. random variables with mean $0$, arise in innumerable settings in the natural sciences and mathematics \cite{Istratov1999}, including optics \cite{Brubach2009}, astrophysics \cite{Rust2010}, food science \cite{Kirtil2016}, geophysics \cite{Sonnenschmidt1996}, and biology \cite{Brownstein1979,Oberhauser1998}.
The main problem is to determine the parameters $A_k$ and $\lambda_k$. The literature is less focused on determination of the number of components  $K$, although that may also be of substantial interest and can be addressed by a number of statistical methods. The value of $K$ is often assumed. 

Our main motivating example is magnetic resonance relaxometry, in which the proton nuclei of water are first excited with radio frequency pulses and then exhibit an exponentially decaying electromagnetic signal. These applications most often have $K=2$, although applications with $K \geq 3$ appear occasionally.
In general, biological tissue will contain multiple states of water defined by, among other factors, molecular mobility, often modeled as the co-existence of discrete compartments \cite{Graham1996}. In principle, these compartments can then be separately mapped based on the fact that they exhibit different MR transverse decay time constants, or $T_2$ values, as follows. A particularly important example is myelin mapping in the brain. Myelin forms an insulating sheath surrounding neuronal axons in the brain and spinal cord and is critical for the transmission of electrical signals \cite{Jahn2009}. Water within the myelin sheath itself exhibits limited mobility and therefore, according to MR theory, exhibits a more rapid electromagnetic signal decay as compared to the more mobile water. The more rapidly decying signal component is designated as the myelin water fraction (MWF). Thus, the MR signal is composed of sub-signals from both the MWF and the more mobile water, with these two contributions separately quantified by making use of their different relaxation time constants. MWF mapping in the human brain based on this principle was pioneered by Mackay, \cite{Mackay1994}, who focused on the related model of a discretized continuous range of tissue decay time constants. MWF mapping is applied  widely in the investigation of cognitive decline \cite{Bouhrara2020MWF,Gong2023LowerMyelin}, Alzheimer’s disease \cite{Dean2017,Kavroulakis2018}, neurodegenerative diseases such as multiple sclerosis \cite{Laule2004}, and in studies of normative aging that are characterized by or exhibit myelin loss \cite{Bouhrara2020CerebraBlood,
Bouhrara2021Maturation,Bouhrara2020QuantitativeMyelin,Bouhrara2021Gratio,Bouhrara2020MWF,Drenthen2019,Laule2004,Piredda2021,Vavasour2006}. We provide preliminary corresponding results below, in the section entitled "Numerical Results". 

In addition to widespread application, these techniques are still undergoing important refinement, due to the complexity of separating the signal generated by the MWF from the signal generated by the more mobile water. The difficulty in this and related multiexponential analyses may be understood in the following way.  It is well-known that the solution of the Fredholm equation of the first kind (FEFK) 

\be\label{eq:Fredholm}
F(t) = \int_{a}^{b}K(t,\tau) d\mu(\tau)
\ee
is an ill-posed inverse problem, with the determination of the unknown measure $\mu$  being highly sensitive to noise. When the kernel function is the exponential, $K(t, \tau) = exp(-t\tau)$,  parameter extraction is a special case of the notoriously ill-posed inverse Laplace transform (ILT)\cite{Celik2013}.  If the support of the unknown distribution function is the sum of point masses, we recover the multiexponential decay problem \eqref{eq:observations}, which inherits its ill-posedness from the FEFK.   
In the discretized case, these difficulties are highlighted by the condition number of the matrix defining the discretization of the FEFK, which can become enormous with fine discretization, even for only two monoexponential  components \cite{Bonny2020,Spencer2020TutorialIntroInverse}. This effect of increasingly finer discretization may be understood by the fact that it brings the discretized problem closer to the ill-posed native integral equation of the FEFK.  

This behavior is in marked contrast to that of the Fourier transform; as a unitary transform, with appropriate normalization, the FT enjoys the optimal condition number of unity; that is, $\|\mathfrak{F}^*\|\|\mathfrak{F}\| = 1$, where $\|\cdot\|$ is the $L^2$ operator norm \cite{Spencer2020TutorialIntroInverse}. 
Given this, the transformation of the ILT to an equivalent Fourier problem is an appealing approach, as was first (to our knowledge) described by Gardner. This allowed for the visualization of decay constants as peaks on a frequency spectrum \cite{Gardner1959}. This work was later built upon by Provencher through use of a of tapered window function to reduce sidelobes in the frequency domain defining the rates $\lambda_k$ \cite{Provencher1976}.  

Here we present a further development in this area, with the goal of transforming the problem, stated most naturally in terms of parameter estimation from the inverse Laplace transform of point masses, to their inverse Fourier transform. To do this, we exploit the properties of the orthonormalized Hermite functions as eigenfunctions of the Fourier transform.  Our approach is in some sense reminiscent of previous work on the expansion of multiexponential signals by orthogonal polynomials \cite{Holmstrom2002}, including an earlier approach for inversion of the Laplace transform through the use of the Jacobi and Laguerre polynomials. \cite{Piessens1972IverseLaplaceNewMethod}. Furthermore, unlike many previous mathematical treatments of multi-component multiexponential decay signals, we incorporate experimental noise into our analysis.

The organization of this paper is as follows. 
We describe some related work in Section~\ref{sec:related}. 
Section~\ref{sec:notation} presents definitions and notation. Section~\ref{sec:idea} introduces our main idea, which is expansion of the multiexponential function in terms of Hermite polynomials and transformation to Fourier space.
 Section~\ref{sec:quadrature} presents a general theory for computation of the coefficients in the expansion of Hermite polynomials from discrete values of $F_0$. 
 We describe the process of least square approximation and related quantities in Section~\ref{sec:leastsq}.
 Based on these considerations, Section~\ref{sec:main} describes our main theorems concerning the errors in the various approximations, including errors due to noise. The theorems in Section~\ref{sec:main} are proved in Section~\ref{sec:proofs}. The procedure for obtaining $\lambda_k$'s from the Fourier transform is based on our earlier work, and is summarized in the Appendix.
 A numerical implementation is given in Section~\ref{sec:Michael}.

\section{Related work}\label{sec:related}

Many methods have been proposed, developed, and implemented in the past 200 years for exponential analysis \cite{Bi2022Stabilization,Brubach2009, Harman1973, Holmstrom2002, Istratov1999,Piessens1975Bibliography,
Prony1795,Rust2010}.
 These methods range from the historical Prony method, still used in modified form, to neural network approaches \cite{Heliot2021,Parasram2021, Rozowski2022, Xiao2021}.
Among the recent methods are various subspace methods such as ESPRIT and ESPIRA \cite{plonka2018numerical, Potts2010, cuyt2024multiscale}. 
 Although these methods work in theory for complex exponents, they are mainly used with purely imaginary exponents and are not effective for real exponents.
 Extensive review material may be found in \cite{Holmstrom2002, Istratov1999}.  
There is some related work e.g., \cite{hackbusch2019computation, derevianko2023differential} which deals with the approximation of functions using real exponential sums.
One could think of the problem as one of extracting the parameters in \eqref{eq:observations} when the values of $F$ are known deterministically but only approximately. 
Another work of particular interest introducing an alternative and highly promising approach is that of Derevianko and Plonka\cite{derevianko2022exact}. 
The authors are motivated by the fact that
$$
\int_\RR \left\{\sum_{k=1}^K A_k\exp(-\lambda_k t)\right\}\exp(-i\omega t)dt = \sum_{k=1}^K \frac{A_k}{i(\omega -i\lambda_k)},
$$
so that finding the poles of a rational approximation to $F_0$ in \eqref{eq:observations} (extended as a periodic function so that a discrete Fourier transform can be used instead) leads to the determination of the parameters.
The method assumes that all the $A_k$'s are positive, and is shown to be accurate for parameter recovery in the absence of noise; the authors further claim that the parameter recover is stable under numerical perturbations.
Effectively, the Fourier transform converts $F_0$ (without noise) to the so-called Stieltjes transform of a measure that associates the mass $A_k$ with $i\lambda_k$. 
It is well known that the denominator of the Pad\'e approximation of this transform is the Prony polynomial, which is an orthogonal polynomial on the unit circle of degree $K$.
Our previous paper \cite{singdet} is similar in spirit to this direction, and works without the assumption that the $A_k$'s are positive, but restricted to purely imaginary exponents.
Finally, we note that the paper \cite{bspaper} deals with the case when the exponents are complex numbers with small real parts and distinct imaginary parts.

 Given theoretical bounds on the ability to define these parameters as a function of signal-to-noise ratio (SNR), measurement intervals and total duration, and number of underlying exponentials
 \cite{Bertero1982,Bertero1984,Bertero1985,Bertero1984SingularValue}, development of further methodology represents attempts to extract the maximum information from a given signal, or to reformulate the problem to circumvent these limitations \cite{Bi2022Stabilization,Hampton2023}.

  We emphasize that in this work, we are analyzing the expression \eqref{eq:observations}, which has been applied to both myelin quantification and other tissue characterization studies in MRI \cite{Bouhrara2015, Bouhrara2015bayesian, Charles2006,Mackay2006, Mackay1994,Sharafi2017, Shinmoto2009}. A related body of published work in the magnetic resonance literature instead aims to recover a spectrum of $T_2$ values; the underlying physics is similar for the two approaches, although the mathematical analysis differs between them. 

\section{Notation}\label{sec:notation}

The orthonormalized Hermite functions are defined (cf. \cite[Formula~(5.5.3)]{szego}) by
\be\label{eq:hermitefndef}
\psi_k(x)=(\pi^{1/2}2^k k!)^{-1/2}(-1)^k\exp(-x^2/2)\left(\frac{d}{dx}\right)^k\exp(-x^2), \qquad k\in\ZZ_+.
\ee
For $n>0$, we denote
$$
\Pi_n=\mathsf{span}\{\psi_k : k<n\}.
$$
Each element of $\Pi_n$ is a function of the form $x\mapsto P(x)\exp(-x^2/2)$ for some polynomial $P$ of degree $<n$. Such a function will be called a \emph{weighted polynomial of degree $<n$}.
We note that the notation does not require $n$ to be an integer; we interpret $\Pi_n=\Pi_{\lfloor n\rfloor}$ if $n$ is not an integer. This simplifies some notation later.

The Fourier transform of a function $f :\RR\to\RR$ is defined formally by
\be\label{eq:fourtransdef}
\mathfrak{F}(f)(\omega)=\frac{1}{\sqrt{2\pi}}\int_\RR f(t)\exp(-i\omega t)dt.
\ee

It is known  (cf. \cite[Theorem~4.3.2]{mhasbk}; the notation is different) that
\be\label{eq:hermitefour}
\mathfrak{F}(\psi_k)(\omega)=(-i)^k \psi_k(\omega), \qquad k\in\ZZ_+, \ \omega\in\RR.
\ee

%We consider a filter $H :\RR\to\RR$, which is an even, $C^\infty$ function, $H(t)=1$ if $|t|\le 1/2$, $H(t)=0$ if $|t|\ge 1$.
%We define the summability operator $\sigma_N$ by
%\be\label{eq:summop}
%\sigma_N(f)(x)=\sum_{|j|<n}H\left(\frac{|j|}{N}\right)\mathfrak{F}(f)(-j)\exp(ijx).
%\ee

Next, we introduce some concepts and notation related to various measures which we will use in this paper.

Rather than writing sums of the form $\sum_{j=1}^m w_jg(y_j)$, it is convenient to use the Stieltjes' integral notation. 
Let $\delta_x$ be the Dirac delta at $x$. 
As a functional (or measure), this is expressed as $\int gd\delta_x=g(x)$. 
We define a (possibly complex) measure $\nu=\sum_{j=1}^m w_j\delta_{y_j}$ (described as a measure that associates the mass $w_j$ with each $y_j$) and write $\int g(y)d\nu(y)$ or even shorter, $\int gd\nu$, instead of $\sum_{j=1}^m w_jg(y_j)$. 
The \emph{support} of $\nu$ (denoted by $\mathsf{supp}(\nu)$) is $\{y_j\}_{j=1}^m$, and $\nu$ is said to be \emph{supported on} any set that contains the support.
The quantity $\sum_{j=1}^m |w_j|$ is the \emph{total variation} of $\nu$, denoted by $\|\nu\|_{TV}$.
There is a measure associated with $\nu$ called the total variation measure, denoted by $|\nu|$. For example, for the above measure $\nu$,
$$
\int gd|\nu|=\sum_{j=1}^m |w_j|g(y_j).
$$
The Laplace transform $\mathcal{L}(\nu)$ and the Fourier transform $\mathfrak{F}(\nu)$ for $\nu=\sum_{j=1}^m w_j\delta_{y_j}$ are defined by
\be\label{eq:measure_transforms}
\mathcal{L}(\nu)(t)=\int \exp(-tu)d\nu(u)=\sum_{j=1}^m w_j\exp(-ty_j), \quad \mathfrak{F}(\nu)(\omega)=\sum_{j=1}^m w_j\exp(-i\omega y_j).
\ee
If $\nu$ is any possibly signed measure, and $A\subseteq\RR$ is $\nu$-measurable, we will write formally
\be\label{eq:lpnorm}
\|f\|_{\nu;p, A}=
\begin{cases}
\disp\left\{\int_A |f|^pd|\nu|\right\}^{1/p}, &\mbox{ if $1\le p<\infty$,}\\[1ex]
\sup_{t\in \mathsf{supp}(\nu)\cap A}|f(t)|, &\mbox{ if $p=\infty$.}
\end{cases}
\ee
The space $L^p(\nu)$ comprises measurable functions $f$ for which $\|f\|_{\nu;p}<\infty$. 
If $A=\RR$, we will omit the mention of $A$ from the notation. 
When $\nu$ is the Lebesgue measure on $\RR$, we will omit its mention and write $\|f\|_p$ in place of $\|f\|_{\nu;p}$, $L^p$ instead of $L^p(\nu)$, etc.
As usual, we identify functions which are equal $|\nu|$-almost everywhere.

For $f\in L^p$, we  define the degree of approximation of $f$ by
\be\label{eq:degapprox}
E_{p,n}(f)=\min_{P\in\Pi_n}\|f-P\|_p.
\ee

\textbf{Constant convention:}\\

\textit{In the sequel, $c, c_1,\cdots$ will denote generic positive constants, whose values may be different at different occurrences, even within a single formula. 
The notation $A\ls B$ means $A\le cB$, $A\gs B$ means $B\ls A$, $A\sim B$ means $A\ls B\ls A$. 
For example, in a formula of the form
$$
A_n\ls n^c\exp(-c_1n)\ls \exp(-cn),
$$
there are implied constants in $\ls$, and the constants $c, c_1$ in the middle term are different from the constant $c$ in the last term.
}

Finally, we recall a fundamental result from \cite[Theorem~6.2.4, Theorem~6.1.6]{mhasbk}.

\begin{prop}\label{prop:mrs}
For any $\delta>0$, $n>0$, $1\le p\le \infty$, and $P\in\Pi_n$,
\be\label{eq:mrsineq}
\|P\|_{p,\RR\setminus [-\sqrt{2n}(1+\delta), \sqrt{2n}(1+\delta)]}\le \exp(-cn)\|P\|_{p, [-\sqrt{2n}(1+\delta), \sqrt{2n}(1+\delta)]}.
\ee
In particular,
\be\label{eq:mrs}
\|P\|_p\sim \|P\|_{[-2\sqrt{n}, 2\sqrt{n}]}.
\ee
\end{prop}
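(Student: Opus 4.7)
The plan is to invoke the Mhaskar-Rakhmanov-Saff (MRS) theory for the Freud weight $w(x)=\exp(-x^2/2)$ on $\RR$. Every $P\in\Pi_n$ admits the representation $P(x)=R(x)w(x)$ with $R$ a polynomial of degree less than $n$, so the statement is a quantitative infinite-finite range inequality for weighted polynomials, with associated MRS number $a_n=\sqrt{2n}$.

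First I would handle the case $p=\infty$. The classical MRS theorem already gives $\|Rw\|_\infty=\|Rw\|_{[-a_n,a_n]}$ at the endpoint $\delta=0$. To upgrade this to exponential decay beyond $[-a_n(1+\delta),a_n(1+\delta)]$, I would use the weighted Chebyshev-type extremal polynomial for $w$: a normalized element of $\Pi_n$ whose sup-norm equals $1$ on $[-a_n,a_n]$ and which grows like $\exp(c(\delta)n)$ on $\{|x|\ge a_n(1+\delta)\}$. A short duality argument then forces every $P=Rw\in\Pi_n$ to satisfy $|P(x)|\le\exp(-c(\delta)n)\,\|P\|_{[-a_n,a_n]}$ outside the enlarged interval. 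The explicit rate $c(\delta)$ is read off from the logarithmic potential of the equilibrium measure associated with the external field $Q(x)=x^2/2$, which supplies both the location $a_n=\sqrt{2n}$ and a strict positive gap in the potential outside its support.

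Next I would pass to $1\le p<\infty$. A convenient route is a dyadic decomposition: partition the complement $\RR\setminus[-a_n(1+\delta),a_n(1+\delta)]$ into annuli $A_j=\{a_n(1+\delta)2^{j-1}\le|x|<a_n(1+\delta)2^j\}$, apply the $p=\infty$ bound on each $A_j$ (the scale $2^j$ corresponds to an effectively larger $\delta$), and sum the resulting geometrically decaying series. A slicker alternative, developed in \cite{mhasbk}, combines a weighted Markov-Bernstein inequality with a Nikolskii-type inequality to convert the $L^\infty$ statement directly into the $L^p$ statement. The second conclusion \eqref{eq:mrs} then follows by selecting $\delta=\sqrt{2}-1$ so that $\sqrt{2n}(1+\delta)=2\sqrt{n}$, with the lower direction $\|P\|_{[-2\sqrt{n},2\sqrt{n}]}\le\|P\|_p$ being trivial.

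I expect the principal obstacle to be pinning down a sharp, explicit exponential rate $c=c(\delta)$. Mere concentration of $\|Rw\|_p$ on $[-a_n,a_n]$ is classical, but converting it into genuine exponential decay on the complement of the enlarged interval requires either Plancherel-Rotach asymptotics for Hermite polynomials in the outer region, or a careful quantitative analysis of the equilibrium potential at points strictly outside its support. Once such a quantitative $p=\infty$ version is in hand, the extension to general $p$ and the derivation of \eqref{eq:mrs} are essentially bookkeeping.
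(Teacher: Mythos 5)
The paper does not actually prove Proposition~\ref{prop:mrs}: it is quoted verbatim from \cite[Theorems~6.1.6 and 6.2.4]{mhasbk}, so there is no in-paper argument to compare yours against. Your outline is, in essence, the standard Mhaskar--Rakhmanov--Saff infinite--finite range argument underlying those theorems: identify $a_n=\sqrt{2n}$ from the equilibrium problem for the external field $Q(x)=x^2/2$, obtain exponential smallness of $|P(x)|$ for $|x|\ge a_n(1+\delta)$ from the strict positivity of the variational functional off the support of the equilibrium measure (equivalently, by a Bernstein--Walsh/weighted-Chebyshev comparison), and then pass to $1\le p<\infty$ either by integrating the pointwise bound over the complement or by Nikolskii-type inequalities, exactly as in the cited source. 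Two small remarks. First, \eqref{eq:mrsineq} only requires \emph{some} $c=c(\delta)>0$, not a sharp rate, so the Plancherel--Rotach asymptotics you flag as the principal obstacle are not needed; the crude potential-theoretic gap outside the support suffices, and your dyadic annuli do sum because the Gaussian factor in $P=Rw$ eventually dominates the at-most-polynomial growth of $R$. Second, for \eqref{eq:mrs} you should note explicitly that the implied constants are independent of $n$: with $\delta=\sqrt2-1$ fixed one has $\|P\|_p^p\le\bigl(1+\exp(-cnp)\bigr)\|P\|_{p,[-2\sqrt n,2\sqrt n]}^p\le 2\|P\|_{p,[-2\sqrt n,2\sqrt n]}^p$ for $p<\infty$, with the obvious analogue for $p=\infty$, and the reverse inequality is trivial. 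With those caveats your sketch is a faithful reconstruction of the proof the paper delegates to the literature.
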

\section{Fundamental Idea}\label{sec:idea}

To explain our main idea, we first define 
a noiseless signal by (cf. \eqref{eq:observations})
\be\label{eq:idealsignal}
F(t)=\sum_{k=1}^K a_k\exp(-t\lambda_k), \qquad t\in\RR.
\ee
With the measure
\be\label{eq:basicmeasure}
\mu^*(t)=\sum_{k=1}^K a_k\delta_{\lambda_k},
\ee
where $\delta_\lambda$ denotes the Dirac delta supported at $\lambda$, we observe that $F$ is the Laplace transform of $\mu^*$. 
The fundamental idea is to convert this Laplace transform of $\mu^*$ into the Fourier transform of another measure $\tilde{\mu}$ supported on $\{\lambda_k\}$ as in \eqref{eq:fouriermu} below.
Towards this end, we introduce another function
\be\label{eq:completesq}
f(t)=F(t)\exp(-t^2/2)=\sum_{k=1}^K 
a_k\exp(\lambda_k^2/2)\exp(-(t+\lambda_k)^2/2).
\ee
and observe the fact that $\mathfrak{F}(\exp(-\cdot^2/2))(\omega)=\exp(-\omega^2/2)$.  
We obtain: 
\be\label{eq:prefundarelation}
\mathfrak{F}(f)(-\omega)=\exp(-\omega^2/2)\sum_{k=1}^K a_k\exp(\lambda_k^2/2)\exp(-i\omega\lambda_k).
\ee
With 
\be\label{eq:fouriermu}
\tilde{\mu}(u)=\sum_{k=1}^K a_k\exp(\lambda_k^2/2)\delta_{\lambda_k},
\ee
we observe 
\be\label{eq:fundarelation}
\mathfrak{F}(\tilde{\mu})(\omega)=\exp(\omega^2/2)\mathfrak{F}(f)(-\omega).
\ee

We will describe in the appendix a method to estimate the $\lambda_k$'s from samples of $\mathfrak{F}(\tilde{\mu})(\omega)$ using our theory of localized trigonometric polynomial kernels \cite{bspaper,singdet,trigwave, loctrigwave}.

In order to compute $\mathfrak{F}(\tilde{\mu})(\omega)$, our basic idea is to  approximate $f$ in terms of the Hermite functions, and use \eqref{eq:hermitefour}.

There are three technical barriers here: one is that the signal $F_0$ as defined in \eqref{eq:observations} is defined only on a finite interval.
The second is that we know only finitely many values of $F_0$, not a formula for $F_0$.
Finally, there is noise.

We will address the first barrier in this section, the second in Section~\ref{sec:quadrature}, and study the effect of noise in Section~\ref{sec:main}.

To remedy the barrier of knowing the signal only a finite interval, we will shift the signal to the left, so that the resulting interval on which the observations lie can contain all the quadrature points which are needed to evaluate the integrals from the values of the functions.
To carry out this program, we introduce a tunable parameter $L>0$,  write $R=T-L$, and define
\be\label{eq:orignalsignal}
\begin{aligned}
F_{\mbox{shifted}}(t)&=F_0(t+L)=\begin{cases}
\sum_{k=1}^K A_k\exp(-L\lambda_k)\exp(-t\lambda_k)+\epsilon(t), &\mbox{ if $-L\le t\le R$,}\\
0 &\mbox{ otherwise,}
\end{cases}\\
&=\begin{cases}
F(t)+\epsilon(t), &\mbox{ if $-L\le t\le R$,}\\
0 &\mbox{ otherwise,}
\end{cases}
\end{aligned}
\ee
where $F$ is defined as in \eqref{eq:idealsignal} with
\be\label{eq:akdef}
a_k=A_k\exp(-L\lambda_k), \qquad \epsilon(t)=\epsilon_0(t+L).
\ee
We observe that with $f$ defined in \eqref{eq:completesq} and 
\be\label{eq:weightednoise}
E(t)=\begin{cases}
\epsilon_0(t+L)\exp(-t^2/2), &\mbox{if $t\in [-L,R]$,}\\
0 &\mbox{otherwise},
\end{cases}
\ee 
we have 
\be\label{eq:originalsignal_final}
F_{\mbox{shifted}}(t)\exp(-t^2/2)=\begin{cases}
f(t)+E(t), &\mbox{ if $-L\le t\le R$,}\\
0 &\mbox{ otherwise}.
\end{cases}
\ee
To summarize, starting from the signal \eqref{eq:observations}, we define $a_k$'s by \eqref{eq:akdef}, $F$ by \eqref{eq:idealsignal}, $f$ by \eqref{eq:completesq}, and modify the noise $\epsilon_0$ in \eqref{eq:observations} as per \eqref{eq:weightednoise}.
In the rest of the paper, we will treat $f$ as the signal, and $E$ as the noise.

\section{Discretization formulas}\label{sec:quadrature}

Another issue that must be addressed is the fact that in the context of signal analysis, we know the values of $f$ only at finitely many points on $[-L,R]$; these are the points at which the signal is sampled.  Therefore, the expansion of $f$ must be calculated in terms of Hermite functions using only these values. 
 This motivates our use of the Gauss quadrature formula to compute the coefficients 
 $\int_\RR f(t)\psi_k(t)dt$. 
 However, it is a delicate matter to compute the quadrature nodes and weights \cite{townsend2015fast,trefethen2022exactness}.
 Moreover,  this requires the knowledge of $f$ at exactly the Gauss quadrature nodes, which is not the case in practice. The existence of 
 quadrature formulas based on arbitrary nodes are known to exist \cite{mohapatrapap}, but
it will be seen that for our purposes we do not need quadrature formulas exact for integrating products of Hermite functions of some order. 
Rather, it is sufficient to approximate $f$ using 
least squares approximation, provided the points at which $f$ is evaluated are sufficiently dense in $[-L,R]$.
This is encapsulated in the following definition.
\begin{definition}\label{def:mzmeasuredef}
A sequence $\{\nu_m\}_{m=1}^\infty$  of possibly signed measures is called a \textbf{strongly Marcinkiewicz Zygmund sequence} if 
each of the following two conditions is satisfied.
\be\label{eq:strongsmz}
\int_\RR |P(t)|^2d|\nu_m|(t) \sim \int_\RR |P(t)|^2dt, \qquad P\in\Pi_m.
\ee
\be\label{eq:mztotalvariation}
\|\nu_m\|_{TV} \sim m^{1/2}.
\ee
By an abuse of notation, we may say that $\nu_m$ is a strongly Marcinkiewicz Zygmund measure of order $m$, and write $\nu_m \in SMZ(m)$.
\end{definition}

\begin{uda}\label{uda:trivial}
{\rm
Obviously, the Lebesgue measure on $\RR$ satisfies \eqref{eq:strongsmz} for every $m\ge 1$, but not the condition \eqref{eq:mztotalvariation}. 
However, in view of Proposition~\ref{prop:mrs}, the Lebesgue measure restricted to $[-2\sqrt{m},2\sqrt{m}]$ belongs to $SMZ(m)$.
\qed}
\end{uda}
\begin{uda}\label{uda:quadrature}
{\rm
Let $x_{m,m}<\cdots<x_{1,m}$ be the zeros of $\psi_m$, and $\lambda_{k,m}$ be the corresponding weights in the Gauss quadrature formula:
\be\label{eq:gauss}
\sum_{k=1}^m \lambda_{k,m}\exp(x_{k,m}^2/2)P(x_{k,m})=\int_\RR P(t)\exp(-t^2/2)dt, \qquad P\in \Pi_{2m}.
\ee
Writing $w_{k,m}=\lambda_{k,m}\exp(x_{k,m}^2)$, it follows that
$$
\sum_{k=1}^m w_{k,m}|P(x_{k,m})|^2 =\int |P(t)|^2dt, \qquad P\in\Pi_m.
$$
Let $\nu_m$ be the measure that associates the mass $w_{k,m}$ with each $x_{k,m}$. 
Clearly, $\nu_m$ satisfies \eqref{eq:strongsmz}. 
In view of \cite[Theorem~8.2.7]{mhasbk} (used with $p=2$, $b=0$), $\nu_m$ satisfies \eqref{eq:mztotalvariation} as well, so that $\nu_m\in SMZ(m)$.
\qed
}
\end{uda}

\begin{uda}\label{uda:batenkov}
{\rm
There exist $c, c_1$ with the following property \cite{batenkov2018stable}: Let $\{x_{k,m}\}_{k=1}^m$ be any real numbers, $x_{m,m}<\cdots<x_{1,m}$, such that
\be\label{eq:batenkov1}
[x_{m,m}, x_{1,m}]\subseteq [-\sqrt{2m}(1+cm^{-2/3}), \sqrt{2m}(1+cm^{-2/3})],
\ee
and
\be\label{eq:batenkovsep}
|x_{k,m}-x_{k+1,m}|\le c_1m^{-1/2}.
\ee
Let $\nu_m$ be the measure that associates the mass $x_{k,m}-x_{k+1,m}$ with $x_{k,m}$.
Then for $1\le p<\infty$,
\be\label{eq:batenkovmz}
\left|\|P\|_{\nu_m;p}^p -\|P\|_p^p\right| \le (1/4)\|P\|_p^p, \qquad P\in \Pi_m.
\ee
Clearly, $\nu_m\in SMZ(m)$.
\qed}
\end{uda}

\section{Least squares approximation}\label{sec:leastsq}

Given integers $m\ge n\ge 1$, a measure $\nu\in SMZ(m)$, and $f \in C_0(\RR)$, we can define the least square approximation to $f$ from $\Pi_n$ by
\be\label{eq:leastsqop}
S_n(\nu; f)=\argmin_{P\in\Pi_n}\|f-P\|_{\nu;2}^2=\argmin_{P\in \Pi_n}\int |f(x)-P(x)|^2d|\nu|(x).
\ee 
\emph{In the sequel, we will assume that $\nu$ is a positive measure, so that $\nu=|\nu|$.}
\begin{rem}\label{rem:relevance}
{\rm
We note that $S_n(\nu;f)$ is defined entirely in terms of the restriction of $f$ to $\mathsf{supp}(\nu)$. Thus, with the notation as in Section~\ref{sec:notation}, and $f$ as defined in \eqref{eq:completesq}, if $\mathsf{supp}(\nu)\subset [-L,R]$, then $S_n(\nu;f)=S_n(\nu;f|_{[-L,R]})$.
\qed
}
\end{rem}
%\begin{rem}\label{rem:explicitform}
%{\rm
%To find , let
%
%\be

%
%The Fourier transform of $S_n(\nu;f)$ is given by
%
%\be
%\label{eq:Fouranditsapprox}
%\mathfrak{F}(S_n(\nu;f))(\omega)=\sum_{k=0}^{n-1}d_k(-i)^k\psi_k.
%\ee
%
%When $\nu$ is the Gauss quadrature measure, then the matrix $G$ is the identity matrix, and $d_k=\hat{f}(\nu;k)$ for $k=0,\cdots,n-1$.
%\qed}
%\end{rem}

We will need an explicit expression for $S_n(\nu;f)$ in the form $\int f(y)K_n(\nu;x,y)d\nu(y)$ for a reproducing kernel $K_n(\nu;x,y)$ (cf. \eqref{eq:darboux}).

To obtain an expression for this kernel, 
we define the Gram matrix $\mathbf{G}$ by
\be\label{eq:grammatrix}
\mathbf{G}_{\ell,k}=\int \psi_\ell(x)\psi_k(x)d\nu(x), \qquad \ell, k=0,\cdots, m-1.
\ee
A quadratic form for $\mathbf{G}$ is
\be\label{eq:gquadform}
\sum_{\ell, k}d_\ell \overline{d_k}\mathbf{G}_{\ell, k} =\int \left|\sum_j d_j \psi_j\right|^2d\nu.
\ee
Since
$$
\int \left|\sum_j d_j \psi_j(x)\right|^2dx=\sum_j |d_j|^2,
$$
the estimate \eqref{eq:strongsmz} and the Raleigh-Riesz principle shows that $\mathbf{G}$ is positive definite, and all of its singular values are bounded from above and below by constants independent of $m$. 
In particular, $\mathbf{G}$ is invertible and positive definite, with all of the singular values of $\mathbf{G}^{-1}$ also bounded from above and below by constants independent of $m$. 
Thus, there exists a lower triangular matrix $\mathbf{L}$ so that the Cholesky decomposition
\be\label{eq:cholesky}
\mathbf{G}^{-1}=\mathbf{L}\mathbf{L}^*
\ee
holds.
Moreover, for all $\{d_\ell\}\subset \CC$,
\be\label{eq:invGnorm}
\sum_{\ell, k}d_\ell \overline{d_k}\mathbf{G}_{\ell, k}  \sim \sum_{\ell}|d_\ell|^2.
\ee
Let
\be\label{eq:nuorthopoly}
\tilde{p}_j =\sum_{\ell=0}^{j}\mathbf{L}_{\ell,j}\psi_\ell,\qquad j=0,\cdots,m-1.
\ee
Using the definition of the Gram matrix and the Cholesky decomposition in \eqref{eq:cholesky}, it is easy to verify that
\be\label{eq:nuorthorel}
\int \tilde{p}_j(x)\tilde{p}_k(x)d\nu(x)=\delta_{j,k}, \qquad j, k=0,\cdots,m-1.
\ee

Next, we define the reproducing kernel
\be\label{eq:darboux}
K_n(\nu;x,y)=\sum_{j=0}^{n-1} \tilde{p}_j(x)\tilde{p}_j(y)=\sum_{\ell,\ell'=0}^{n-1} \mathbf{G}^{-1}_{\ell,\ell'}\psi_\ell(x)\psi_{\ell'}(y), \qquad x,y\in\RR.
\ee

The Fourier transform of $K_n(\nu;\cdot, y)$ is given by
\be\label{eq:fourdarboux}
\widehat{K}_n(\nu;x,y)=\mathfrak{F}(K_n(\nu;\cdot,y))(x)=\sum_{j=0}^{n-1} \mathfrak{F}(\tilde{p}_j)(x)\tilde{p}_j(y)
\ee
It is not difficult to verify that for $x\in\RR$,
\be\label{eq:leastsqexpression}
S_n(\nu;f)(x)=\int f(y)K_n(\nu;x,y)d\nu(y), \qquad \mathfrak{F}(S_n(\nu;f))(\omega)=\int f(y)\widehat{K}_n(\nu;\omega,y)d\nu(y).
\ee

Computationally, the procedure to find $S_n(\nu;f)$ is as follows.
We define
\be
\label{eq:coeffIntegral}
 \hat{f}(\nu;\ell)=\int f(y)\psi_\ell(y)d\nu(y), \qquad \ell=0,\cdots, n-1,
\ee
and solve  the linear system of equations
\be
\label{eq:FindCoeff_SoE}
\sum_{k=0}^{n-1}\mathbf{G}_{\ell,k}d_k(f)=\hat{f}(\nu;\ell), \qquad \ell=0,\cdots, n-1.
\ee
for $d_k(f)$'s.
Then $S_n(\nu;f)$ is given by
$S_n(\nu; f)=\sum_{k=0}^{n-1} d_k(f)\psi_k$.

\begin{rem}\label{rem:fourpartial}
{\rm
In the case when $\nu$ is the Lebesgue measure on $\RR$, $S_n(\nu;f)$ is the partial sum of the Hermite expansion.
The corresponding reproducing kernel is given by
\be\label{eq:hermite_reprod_kern}
K_n(x,y)=\sum_{k=0}^{n-1}\psi_k(x)\psi_k(y), \qquad x, y\in\RR.
\ee
\qed}
\end{rem}

\section{Main theorems}\label{sec:main}

As remarked earlier, if $\nu\in SMZ(m)$ is supported on $[-L,R]$, then 
\be\label{eq:mainintro}
S_n(\nu;F_{\mbox{shifted}}\exp(-\cdot^2/2))=S_n(\nu;f)+S_n(\nu;E),
\ee 
can be calculated using observed values of $F_0$.

Our first theorem is a general theorem, giving estimates on the error in approximating $\mathfrak{F}(f)$ by the Fourier transform of the least square approximation to $f$; i.e., $\mathfrak{F}(S_n(\nu;f))$.
The first estimate \eqref{eq:fourdegapprox} has a desired weaker dependence on $n$, but the estimate is in comparison with the best polynomial approximation of $f$ from $\Pi_{n/2}$ rather than from  $\Pi_{n}$. 
This is useful when this degree of approximation decreases polynomially with $n$. 
The second estimate \eqref{eq:fourprojapprox} compares the error in terms of the best approximation from $\Pi_n$ itself (the same degree as $S_n(\nu;f))$.
This exhibits a more rapid rate of convergence as a function of $n$ when this degree of approximation decreases faster than any polynomial in $1/n$, as is the case for \eqref{eq:completesq} (cf. \eqref{eq:errest}), our main concern in this paper.
\begin{theorem}
\label{theo:fourdegapprox}
Let $f\in C_0\cap L^1$, $m\ge n\ge1$ be integers and, $\nu\in SMZ(m)$ be a positive measure. 
Then
\be\label{eq:fourdegapprox}
\|\mathfrak{F}(f)-\mathfrak{F}(S_n(\nu;f))\|_\infty \ls \|f-S_n(\nu;f)\|_1\ls E_{1,n/2}(f)+nE_{\infty, n/2}(f).
\ee
Alternately,
\be\label{eq:fourprojapprox}
\|\mathfrak{F}(f)-\mathfrak{F}(S_n(\nu;f))\|_\infty \ls \|f-S_n(\nu;f)\|_1\ls n^{1/2}E_{1,n}(f)+n^{3/2}E_{\infty, n}(f).
\ee
\end{theorem}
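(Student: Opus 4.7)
The first inequality in both \eqref{eq:fourdegapprox} and \eqref{eq:fourprojapprox} will be immediate from the elementary bound $|\mathfrak{F}(h)(\omega)|\le (2\pi)^{-1/2}\|h\|_1$ applied with $h = f - S_n(\nu;f)$, so the real task is to control $\|f - S_n(\nu;f)\|_1$. The standard reduction is available because $\nu\in SMZ(m)$ makes the Gram matrix $\mathbf{G}$ invertible and hence $S_n(\nu;P) = P$ for every $P\in\Pi_n$; thus for any such $P$,
\[
\|f - S_n(\nu;f)\|_1 \le \|f - P\|_1 + \|S_n(\nu;f-P)\|_1,
\]
and the proof reduces to an operator bound on $S_n(\nu)$ in $L^1$ combined with a judicious choice of $P$.

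The technical core will be a sharp bound on the reproducing kernel, namely $\|K_n(\nu;\cdot,y)\|_1 \ls n^{1/2}$ uniformly in $y$. My plan is to localize via Proposition~\ref{prop:mrs} since $K_n(\nu;\cdot,y)\in\Pi_n$, then apply Cauchy--Schwarz on $[-2\sqrt n,2\sqrt n]$ to obtain $\|K_n(\nu;\cdot,y)\|_1 \ls n^{1/4}\|K_n(\nu;\cdot,y)\|_2$, and exploit the SMZ equivalence $\|\cdot\|_2 \sim \|\cdot\|_{\nu;2}$ on $\Pi_n$ together with the reproducing-kernel identity $\|K_n(\nu;\cdot,y)\|_{\nu;2}^2 = K_n(\nu;y,y)$. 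Viewing $K_n(\nu;y,y) = \sum_{\ell,\ell'}\mathbf{G}^{-1}_{\ell,\ell'}\psi_\ell(y)\psi_{\ell'}(y)$ as a quadratic form, the bound $\|\mathbf{G}^{-1}\|\ls 1$ from \eqref{eq:invGnorm} and the classical Christoffel estimate $\sum_{\ell=0}^{n-1}|\psi_\ell(y)|^2 \ls n^{1/2}$ complete the bound. Fubini then delivers
\[
\|S_n(\nu;g)\|_1 \le \int|g(y)|\,\|K_n(\nu;\cdot,y)\|_1\,d\nu(y) \ls n^{1/2}\|g\|_{\nu;1},
\]
and the trivial estimate $\|g\|_{\nu;1}\le \|\nu\|_{TV}\|g\|_{\nu;\infty}\ls n^{1/2}\|g\|_\infty$ yields the companion bound $\|S_n(\nu;g)\|_1 \ls n\|g\|_\infty$.

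For \eqref{eq:fourdegapprox}, I would take $P\in\Pi_n$ via a de la Vallée--Poussin--type construction that reproduces $\Pi_{n/2}$ and is bounded in both $L^1$ and $L^\infty$ simultaneously, giving $\|f-P\|_p\ls E_{p,n/2}(f)$ for $p\in\{1,\infty\}$; the $\|f-P\|_1$ term then contributes $E_{1,n/2}(f)$, and the $L^\infty$ operator bound contributes $n\,E_{\infty,n/2}(f)$. For \eqref{eq:fourprojapprox}, I would take $P\in\Pi_n$ to be the best $L^1$ approximation and decompose $f - P = (f - P^*) + (P^* - P)$ with $P^*\in\Pi_n$ the best $L^\infty$ approximation, controlling $\|f-P^*\|_{\nu;1}\le \|\nu\|_{TV}E_{\infty,n}(f)$ while bounding $\|P^*-P\|_{\nu;1}\le \|\nu\|_{TV}^{1/2}\|P^*-P\|_{\nu;2}$ and converting the polynomial $L^2$ factor to an $L^1$ factor via a Nikolskii inequality of the form $\|Q\|_2\ls n^{1/4}\|Q\|_1$ for $Q\in\Pi_n$; this route yields $n^{1/2}E_{1,n}(f)+n^{3/2}E_{\infty,n}(f)$. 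The hardest part will be supplying the de la Vallée--Poussin polynomial for the weighted Hermite setting with simultaneous $L^1$ and $L^\infty$ boundedness needed for \eqref{eq:fourdegapprox}; the kernel bound itself and the Nikolskii manipulations for \eqref{eq:fourprojapprox} are, in principle, standard applications of the material already established in the paper.
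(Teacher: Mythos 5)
Your overall architecture (trivial $L^1\to L^\infty$ bound for $\mathfrak{F}$, reproduction $S_n(\nu;P)=P$, an $L^1$ operator bound for $S_n(\nu;\cdot)$, and a simultaneous near-best approximant) is exactly the paper's, and your kernel estimate $\|K_n(\nu;\cdot,y)\|_1\ls n^{1/2}$ via Proposition~\ref{prop:mrs}, Cauchy--Schwarz, the SMZ equivalence and $K_n(\nu;y,y)\ls n^{1/2}$ is sound. The gap is in the step ``$\|g\|_{\nu;1}\le\|\nu\|_{TV}\|g\|_{\nu;\infty}\ls n^{1/2}\|g\|_\infty$'': by \eqref{eq:mztotalvariation} the total variation of a measure in $SMZ(m)$ is $\sim m^{1/2}$, not $n^{1/2}$, and the theorem only assumes $m\ge n$ (in the paper's eventual application one even has $m\gs n^2$). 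Your argument therefore yields $\|S_n(\nu;g)\|_1\ls n^{1/2}m^{1/2}\|g\|_\infty$, which does not give the factor $n$ in \eqref{eq:fourdegapprox}. The missing ingredient is a localization in the $y$-variable: since $y\mapsto K_n(\nu;x,y)\in\Pi_n$, Proposition~\ref{prop:mrs} lets one discard $|y|>2\sqrt{n}$, and the local mass bound $\nu([x-C_1/\sqrt{m},x+C_1/\sqrt{m}])\ls m^{-1/2}$ (the paper's Lemma~\ref{lemma:ballmeasure}, proved via the reproducing kernel $K_m$ of the full order $m$) gives $\nu([-2\sqrt{n},2\sqrt{n}])\ls\sqrt{n}$ independently of $m$. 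The paper in fact bounds $\int|K_n(\nu;x,y)|\,d|\nu|(y)\ls n^{1/2}$ this way, deduces $\|S_n(\nu;g)\|_\infty\ls n^{1/2}\|g\|_\infty$, and converts to $\|S_n(\nu;g)\|_1\ls n\|g\|_\infty$ by the Nikolskii inequality; your Fubini route works too once the $y$-localization is supplied.

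Your route to \eqref{eq:fourprojapprox} has a second, independent problem: using two different competitors ($P$ best in $L^1$, $P^*$ best in $L^\infty$) forces you to bound $\|P^*-P\|_1$, hence $\|f-P^*\|_1$, and the $L^1$ error of the best \emph{uniform} approximant is not controlled by $E_{1,n}(f)$ and $E_{\infty,n}(f)$ (the tail $\int_{|x|>2\sqrt n}|f|$ contributes a term like $\exp(-cn)\|f\|_1$, which is \emph{not} negligible for the entire functions $g_\lambda$ of interest, whose $E_{p,n}$ decay super-exponentially). The paper avoids this by using a single polynomial that is simultaneously near-best in $L^1$ and $L^\infty$ with constant $n^{1/2}$, namely the partial sum $s_n(f)$ of the Hermite expansion: Lemma~\ref{lemma:goodapprox}(b), obtained from the uniform bound $\int|K_n(x,y)|\,dy\ls n^{1/2}$ and Riesz--Thorin, gives $\|f-s_n(f)\|_p\ls n^{1/2}E_{p,n}(f)$ for $p=1,\infty$, and substituting $P=s_n(f)$ into the master inequality $\|f-S_n(\nu;f)\|_1\ls\|f-P\|_1+n\|f-P\|_\infty$ finishes. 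You should replace your two-competitor decomposition with this.
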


When $f$  is a discrete Laplace transform as in \eqref{eq:completesq}, we need to compare the desired Fourier transform in \eqref{eq:fundarelation} with the Fourier transform of $S_n(\nu;f)$. Here, $f$ being an entire function of exponential type, the estimate \eqref{eq:fourprojapprox} exhibits a more rapid rate of convergence with respect to $n$. 

Although the following theorem is just a corollary of Theorem~\ref{theo:fourdegapprox}, we list it as a separate theorem in view of its relation with the central problem of this paper.
\begin{theorem}
\label{theo:noiselessfour}
Let $f$ be as defined in \eqref{eq:completesq}, $m\ge n\ge 1$ be  integers, $\nu\in SMZ(m)$, and $\mathsf{supp}(\nu)\subseteq [-L,R]$. Let $\Lambda=\max_{1\le \ell \le K}\lambda_\ell$. Then
\be\label{eq:noiselessfour}
\|\mathfrak{F}(f)-\mathfrak{F}(S_n(\nu;f))\|_\infty \ls \|f-S_n(\nu;f)\|_1\ls \frac{n^{3/4}}{\sqrt{n!}}(\Lambda/\sqrt{2})^n\exp(\Lambda^2/4)\sum_{\ell=1}^K |a_\ell|.
\ee
\end{theorem}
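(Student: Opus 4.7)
The plan is to reduce to Theorem~\ref{theo:fourdegapprox} by computing the Hermite expansion of $f$ in closed form and estimating the best-approximation tails $E_{1,n}(f)$ and $E_{\infty,n}(f)$ with the specific structure of \eqref{eq:completesq}.

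First, I would derive an explicit Hermite expansion of each shifted Gaussian appearing in \eqref{eq:completesq}. Starting from the generating identity $\exp(2tu - u^2) = \sum_{k\ge 0} H_k(t)\,u^k/k!$ for the physicist's Hermite polynomials, substituting $u = -\lambda/2$, multiplying through by $e^{-t^2/2}$, and using the normalization $e^{-t^2/2} H_k(t) = (\pi^{1/2}2^k k!)^{1/2}\psi_k(t)$ implicit in \eqref{eq:hermitefndef}, one obtains
\[
\exp(-(t+\lambda)^2/2) = \pi^{1/4} e^{-\lambda^2/4}\sum_{k=0}^{\infty}\frac{(-\lambda/\sqrt{2})^k}{\sqrt{k!}}\,\psi_k(t).
\]
Summing over $\ell$ with weights $a_\ell e^{\lambda_\ell^2/2}$ as in \eqref{eq:completesq} yields $f = \sum_k c_k\psi_k$ with
\[
|c_k| \le \pi^{1/4} e^{\Lambda^2/4}\frac{(\Lambda/\sqrt{2})^k}{\sqrt{k!}}\sum_{\ell=1}^{K}|a_\ell|.
\]

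Next, setting $b_k := (\Lambda/\sqrt{2})^k/\sqrt{k!}$, observe that $b_{k+1}/b_k = \Lambda/\sqrt{2(k+1)} \le 1/2$ as soon as $k \ge 2\Lambda^2$, so for $n$ past this threshold the tail $\sum_{k\ge n} b_k\,\|\psi_k\|_p$ is geometrically dominated by its leading term $b_n\|\psi_n\|_p$. I would then use the classical Hermite-function bounds $\|\psi_k\|_\infty \ls 1$ and $\|\psi_k\|_1 \ls k^{1/4}$ (the latter because $\psi_k$ is essentially supported on $[-2\sqrt{k},2\sqrt{k}]$ with mean amplitude $k^{-1/4}$, by Proposition~\ref{prop:mrs} and Cauchy--Schwarz against $\|\psi_k\|_2=1$) together with the trivial comparison $E_{p,n}(f) \le \|f - \sum_{k<n} c_k\psi_k\|_p \le \sum_{k\ge n}|c_k|\|\psi_k\|_p$ to obtain
\[
E_{1,n}(f) \ls n^{1/4} b_n e^{\Lambda^2/4}\sum_{\ell}|a_\ell|, \qquad E_{\infty,n}(f) \ls b_n e^{\Lambda^2/4}\sum_{\ell}|a_\ell|.
\]
Inserting these estimates into \eqref{eq:fourprojapprox} produces the $n^{3/4}$ prefactor in \eqref{eq:noiselessfour} from the $n^{1/2}E_{1,n}(f)$ contribution.

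I expect the principal obstacle to be the clean execution of the first step: keeping straight the normalization factors between $H_k$ and $\psi_k$, the specialization $u=-\lambda/2$ in the generating function, and the $\lambda^2/4$ cancellation, without sign or factor-of-$\sqrt{2}$ errors. A secondary, more cosmetic point is that the $n^{3/2}E_{\infty,n}(f)$ term in \eqref{eq:fourprojapprox} nominally contributes a larger polynomial prefactor than $n^{3/4}$; this is absorbed either by invoking the sharper pointwise bound $\|\psi_k\|_\infty \ls k^{-1/12}$, or more practically by noting that the super-exponential decay of $b_n$ in $n$ dwarfs any polynomial prefactor, so that the rate stated in \eqref{eq:noiselessfour} faithfully reflects the dominant $E_{1,n}$ term.
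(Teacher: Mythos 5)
Your proposal is correct and follows essentially the same route as the paper: expand each shifted Gaussian via the Hermite generating function at $w=-\lambda/2$, bound the tail coefficients $(\Lambda/\sqrt{2})^k/\sqrt{k!}$ to estimate $E_{1,n}(f)$ and $E_{\infty,n}(f)$ using $\|\psi_k\|_1\ls k^{1/4}$, and insert these into \eqref{eq:fourprojapprox}. The only cosmetic differences are that the paper controls the tail with the inequality $\sum_k x^k/\sqrt{k!}\le e^{x^2}$ rather than your geometric-ratio argument (which needs $n\gtrsim\Lambda^2$), and the paper silently ignores the same $n^{3/2}E_{\infty,n}$ prefactor issue that you explicitly flag and absorb into the super-exponential decay.
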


Next, we turn our attention to the noise part, $S_n(\nu;E)$.
We recall the notation \eqref{eq:akdef} and \eqref{eq:weightednoise} that relates the quantity $E$ below with the noise $\epsilon_0$ in \eqref{eq:observations}.
\begin{theorem}
\label{theo:noisetheo}
Let $m\ge n\ge 1$ be integers, $\nu\in SMZ(m)$, $\mathsf{supp}(\nu)\subseteq [-L,R]$.
With the constant $C$ as in Lemma~\ref{lemma:hermitelemma}(b), 
let 
%\be\label{eq:nusupport}
%\mathsf{supp}(\nu)\subset [-C\sqrt{m}, C\sqrt{m}]\subseteq [-L,R].
%\ee
%Let 
\be\label{eq:degcond}
C\sqrt{m}\ge 2n \ge c\log m
\ee
for a suitable choice of $c$. 
We assume that  the expected value of each $E(y)$ is $0$, and there exists $S>0$ such that
\be\label{eq:noisecond}
|E(y)|\le S, \qquad y\in\RR.
\ee
Then for any $\delta\in (0,1)$, we have with probability $>1-\delta$
\be\label{eq:singleEest}
\left\|\int E(y)\hat{K}_n(\nu;w,y)d\nu(y)\right\|_\infty \ls S\left(\frac{\log m}{m}\right)^{1/8}\sqrt{\log\left(\frac{\log m}{\delta}\right)}.
\ee  
\end{theorem}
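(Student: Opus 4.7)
The plan is to view $\nu=\sum_j w_j\delta_{y_j}$ as a finitely supported measure with nodes in $[-L,R]$, so that for each fixed $\omega$ the quantity
\[X(\omega):=\int E(y)\hat K_n(\nu;\omega,y)\,d\nu(y)=\sum_j w_j E(y_j)\hat K_n(\nu;\omega,y_j)\]
is a sum of independent, mean-zero random variables, each bounded in magnitude by $Sw_j|\hat K_n(\nu;\omega,y_j)|$. Hoeffding's inequality then yields
\[\mathrm{Pr}\bigl[|X(\omega)|>t\bigr]\le 2\exp\!\left(-\dfrac{t^2}{2S^2\sigma^2(\omega)}\right),\qquad \sigma^2(\omega):=\sum_j w_j^2|\hat K_n(\nu;\omega,y_j)|^2,\]
so the whole argument reduces to (i) a uniform-in-$\omega$ bound on $\sigma^2(\omega)$ and (ii) a mechanism for turning pointwise concentration into an $L^\infty$ bound.

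For step (i), I would combine the closed form \eqref{eq:fourdarboux} with the orthonormality relation \eqref{eq:nuorthorel} to rewrite $\int|\hat K_n(\nu;\omega,y)|^2\,d\nu(y)=\sum_{j<n}|\mathfrak F(\tilde p_j)(\omega)|^2$, then invoke the Fourier eigenfunction identity \eqref{eq:hermitefour} and the condition-number estimate \eqref{eq:invGnorm} to see that this sum is comparable to the classical Christoffel-type quantity $\sum_{\ell<n}|\psi_\ell(\omega)|^2$. Lemma~\ref{lemma:hermitelemma}(b), whose constant $C$ appears in \eqref{eq:degcond}, supplies a uniform bound of order $\sqrt n$ on this quantity, while the SMZ total-variation condition \eqref{eq:mztotalvariation}, as realized concretely in Examples~\ref{uda:quadrature} and~\ref{uda:batenkov}, gives $\max_j w_j\ls m^{-1/2}$. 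Combining the two yields $\sigma^2(\omega)\ls m^{-1/2}\sqrt n$ uniformly, valid under the upper bound $2n\le C\sqrt m$ built into \eqref{eq:degcond}.

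For step (ii), I would cover the effective $\omega$-range — localized to $|\omega|\ls\sqrt n$ up to exponentially small tails by Proposition~\ref{prop:mrs} — with an $\epsilon$-net of $N$ points, where $N$ is polynomial in $m$, and control the deterministic oscillation of $\omega\mapsto X(\omega)$ between grid points by a Bernstein-type derivative estimate applied to $\hat K_n(\nu;\cdot,y)$, which by \eqref{eq:hermitefour} and \eqref{eq:fourdarboux} lies (up to the conjugations $(-i)^\ell$) in $\Pi_n$ as a function of $\omega$. The union bound over the $N$ grid points converts $\log(1/\delta)$ into $\log(N/\delta)$, which, thanks to the hypothesis $2n\ge c\log m$, is of order $\log(\log m/\delta)$ once $N$ is chosen polynomially in $m$.

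The main technical obstacle is juggling three competing scales — the Christoffel bound forcing $n\ls\sqrt m$, the weight calibration demanding $\max_j w_j\ls m^{-1/2}$, and the Bernstein/Markov cost of discretization introducing further powers of $n$ that must be paid out of the grid spacing. The exponent $1/8$ in \eqref{eq:singleEest} emerges from the balance once these costs are weighed against the concentration rate $\sigma\sqrt{\log(N/\delta)}\ls (n/m)^{1/4}\sqrt{\log(\log m/\delta)}$ and the lower bound $2n\gs\log m$ is used to trade the residual powers of $n$ for powers of $\log m$; I would expect the bulk of the bookkeeping to be devoted to verifying that these trade-offs can be made simultaneously for every SMZ measure, not merely for the model examples.
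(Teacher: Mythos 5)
Your overall architecture matches the paper's: Hoeffding applied to a decomposition of $\int E(y)\hat K_n(\nu;\cdot,y)\,d\nu(y)$ into independent mean-zero pieces, a variance proxy controlled by $\max_j w_j\cdot\int|\hat K_n(\nu;\omega,y)|^2d\nu(y)\ls m^{-1/2}n^{1/2}$, and a union bound over a finite norming set. But there are two concrete gaps. First, the bound $\max_j w_j\ls m^{-1/2}$ does \emph{not} follow from the total-variation condition \eqref{eq:mztotalvariation}, which only controls $\sum_j|w_j|$, nor is it enough to check it on the model examples: the theorem is asserted for every $\nu\in SMZ(m)$. The paper supplies exactly this missing step as Lemma~\ref{lemma:ballmeasure}, proved from \eqref{eq:strongsmz} via the reproducing kernel: $m\,\nu(I)\le\int_I K_m(x,y)^2d\nu(y)\le\int_\RR K_m(x,y)^2d\nu(y)\ls K_m(x,x)\ls\sqrt m$ for intervals $I$ of length $\sim m^{-1/2}$ centered at $|x|\le C\sqrt m$ (this also covers non-atomic SMZ measures such as Example~\ref{uda:trivial}, for which your node-by-node decomposition is not available — the paper instead groups the integral over subintervals $I_k$ with $\nu(I_k)\ls m^{-1/2}$). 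This lemma, not the TV normalization, is the real technical content you have deferred to ``bookkeeping.''

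Second, your union-bound accounting is wrong as stated: if $N$ is polynomial in $m$, then $\log(N/\delta)\asymp\log m+\log(1/\delta)$, which is \emph{not} of order $\log(\log m/\delta)$, and the hypothesis $2n\ge c\log m$ cannot convert one into the other. What that hypothesis actually buys is the suppression of the exponentially small contributions you wave at — e.g.\ the term $\ls S\,\|\nu\|_{TV}\,e^{-cn}\ls Sm^{1/2}e^{-cn}$ from $|y|\ge C\sqrt m$ via \eqref{eq:mrshatkn} — which is $\ls S m^{1/2-cc'}$ only because $n\gs\log m$. To get the stated $\sqrt{\log(\log m/\delta)}$ you need a norming set of cardinality $\sim n$, which is the paper's Lemma~\ref{lemma:covering}; your Bernstein-net argument can be made to produce this (effective range $|\omega|\ls\sqrt n$ by Proposition~\ref{prop:mrs}, spacing $\sim n^{-1/2}$ by \eqref{eq:bernstein}, hence $\sim n$ points), and then the ``judicious choice'' $n\sim\log m$ turns $(n/m)^{1/4}\sqrt{\log(n/\delta)}$ into the claimed bound. (A polynomial-in-$m$ net happens to be salvageable here only because of the slack between the exponents $1/4$ and $1/8$, but that is not the argument you gave.)
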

\begin{rem}\label{rem:noiserem}
{\rm
We can relax the assumption \eqref{eq:noisecond} to assume that $E$ is sub-Gaussian. 
However, this just means that \eqref{eq:noisecond} holds with a high probability, and complicates the notation without adding further insight.
\qed
}
\end{rem}

The following corollary is a simple consequence of Theorem~\ref{theo:noisetheo} and the H\"offding's inequality Lemma~\ref{lemma:hoeffding}.

\begin{cor}\label{cor:meanEest}
With the conditions as in Theorem~\ref{theo:noisetheo}, if we repeat the calculations with $M$ instances of $E(y)$: $E_1(y),\cdots, E_M(y)$, then with probability $>1-\delta$
\be\label{eq:meanEest}
\left\|\frac{1}{M}\sum_{j=1}^M \int E_j(y)\hat{K}_n(w,y)d\nu(y)\right\|_\infty \ls SM^{-1/2}\left(\frac{\log m}{m}\right)^{1/8}\sqrt{\log\left(\frac{M\log m}{\delta}\right)}.
\ee
\end{cor}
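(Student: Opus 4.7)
The plan is to re-run the proof of Theorem~\ref{theo:noisetheo} with the single noise realization $E$ replaced by the $M$-fold average $\bar E:=(1/M)\sum_{j=1}^M E_j$, tracking the gain conferred by averaging at the Hoeffding step.

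First I would rewrite the target quantity as
\be
\bar Y(w):=\frac{1}{M}\sum_{j=1}^M\int E_j(y)\hat K_n(\nu;w,y)d\nu(y) = \int \bar E(y)\hat K_n(\nu;w,y)d\nu(y).
\ee
Since $\nu$ is supported on the finite set $\{y_i\}$, $\bar Y(w)$ is a linear combination of the $M\cdot|\supp(\nu)|$ mean-zero, independent, bounded random variables $E_j(y_i)$ with $|E_j(y_i)|\le S$. Hoeffding's inequality (Lemma~\ref{lemma:hoeffding}) therefore gives, for each fixed $w$,
\be
P(|\bar Y(w)|>t)\le 2\exp\left(-\frac{Mt^2}{2S^2 V(w)}\right),\qquad V(w):=\sum_i\nu(\{y_i\})^2|\hat K_n(\nu;w,y_i)|^2.
\ee
Relative to the analogous step in the proof of Theorem~\ref{theo:noisetheo} (in which only $|\supp(\nu)|$ variables $E(y_i)$ are summed), the extra factor $M$ in the exponent is exactly the $M^{-1/2}$ improvement we seek in the concentration scale.

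Next I would bound $V(w)$ by the kernel estimate already used in Theorem~\ref{theo:noisetheo}. For each fixed $w$, the function $y\mapsto \hat K_n(\nu;w,y)=\sum_{j=0}^{n-1}\mathfrak{F}(\tilde p_j)(w)\tilde p_j(y)$ lies in $\Pi_n$. Hence the strongly Marcinkiewicz--Zygmund property \eqref{eq:strongsmz} (valid since $m\ge n$), combined with the total variation bound \eqref{eq:mztotalvariation}, yields $V(w)\ls \|\hat K_n(\nu;w,\cdot)\|_{\nu;2}^2\sim \|\hat K_n(\nu;w,\cdot)\|_2^2$, which is the very quantity that produces the $(\log m/m)^{1/4}$ factor inside the square root in \eqref{eq:singleEest}.

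Finally I would extend the pointwise bound to the supremum over $w\in\RR$ in the same way as in Theorem~\ref{theo:noisetheo}: discretize $w$ on a grid $\mathcal G$ of cardinality polynomial in $m$, control the continuous supremum by the supremum over $\mathcal G$ via a Bernstein-type Lipschitz estimate on $w\mapsto \mathfrak{F}(\tilde p_j)(w)$, and union-bound the pointwise estimate over $\mathcal G$. Because the target scale $t$ is now a factor $M^{-1/2}$ smaller, matching the Lipschitz residual to $t$ forces the grid cardinality to pick up at most an extra factor of $\sqrt M$, so $\log|\mathcal G|\sim \log M+\log\log m + \log(1/\delta)$, and the union bound produces the $\sqrt{\log(M\log m/\delta)}$ factor in \eqref{eq:meanEest}. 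The main obstacle is purely bookkeeping: confirming that refining the grid in the Lipschitz extension costs no more than a $\sqrt M$ factor in $|\mathcal G|$, and that $V(w)$ inherits the kernel bound used in Theorem~\ref{theo:noisetheo}. Both follow directly from the explicit dependence of $\hat K_n(\nu;w,\cdot)$ on $w$.
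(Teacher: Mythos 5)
Your overall strategy -- rerun the Hoeffding step of Theorem~\ref{theo:noisetheo} for the $M$-fold average so that the sum of squared ranges shrinks by a factor of $M$, then extend to the sup over $w$ by a covering/union bound -- is exactly what the paper intends (it declares the corollary a ``simple consequence'' of Theorem~\ref{theo:noisetheo} and Lemma~\ref{lemma:hoeffding}). But there is one genuine quantitative gap: your bound on $V(w)=\sum_i\nu(\{y_i\})^2|\hat K_n(\nu;w,y_i)|^2$. You assert $V(w)\ls\|\hat K_n(\nu;w,\cdot)\|_{\nu;2}^2$ and that this ``is the very quantity that produces the $(\log m/m)^{1/4}$ factor.'' It is not: by \eqref{eq:fourchristbd}, $\|\hat K_n(\nu;w,\cdot)\|_{\nu;2}^2\sim\sum_j\psi_j(w)^2\ls n^{1/2}\sim(\log m)^{1/2}$, which carries no decay in $m$ at all. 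The $m^{-1/2}$ you need comes from the extra power of the weights in $V(w)$: you must write $V(w)\le\bigl(\max_i\nu(\{y_i\})\bigr)\,\|\hat K_n(\nu;w,\cdot)\|_{\nu;2}^2$ and then invoke Lemma~\ref{lemma:ballmeasure} to get $\max_i\nu(\{y_i\})\ls m^{-1/2}$, whence $V(w)\ls(n/m)^{1/2}$. This is precisely the role the interval decomposition and \eqref{eq:pf4eqn5}--\eqref{eq:pf4eqn8} play in the paper's proof of Theorem~\ref{theo:noisetheo}; as written, your argument drops that ingredient and your claimed final bound does not follow.

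A second, smaller point tied to the same step: Lemma~\ref{lemma:ballmeasure} controls point masses only for $|y|\le C\sqrt m$; points of $\supp(\nu)$ outside that range can individually carry mass as large as $\|\nu\|_{TV}\sim m^{1/2}$. Those must be handled separately, as in \eqref{eq:pf4eqn1}, using the exponential decay \eqref{eq:mrshatkn} of $\hat K_n(w,y)$ for $|y|>2\sqrt n$ together with \eqref{eq:mztotalvariation} and the condition $n\gs\log m$. Your proposal applies the concentration step to all of $\supp(\nu)$ at once and never addresses this tail. Both issues are repairable with the lemmas already in the paper, and the remaining steps (the $M^{-1/2}$ gain in the exponent, the covering argument over the $\sim n$ points of Lemma~\ref{lemma:covering} -- your $\sqrt M$-refined grid is unnecessary since the averaged integral still lies in $\Pi_n$, but harmless and it is what makes the $\log M$ appear inside the logarithm) are sound.
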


\section{Numerical Results}\label{sec:Michael}
We present preliminary numerical results to establish the practicality of the above analysis. We denote the procedure for determining the $\lambda_j$'s in \eqref{eq:observations}  as ``Laplace to Fourier (L2F).'' Our examples will illustrate results for several biexponential signals, all with $A_1$=0.5, $A_2$=0.5. Values of $T_{21}$=10ms and $T_{22}$=50ms, with corresponding $\lambda_1=1/T_{21}$ and $\lambda_2=1/T_{22}$, reflect a relatively easy parameter estimation problem, since the ratio of decay constants is large. Values of $T_{21}$=40ms and $T_{22}$=60ms represent a much more difficult problem, while $T_{21}$=50ms and $T_{22}$=60ms is still harder. All parameters may of course be varied to explore behavior in different regions of parameter space. 

The first step in L2F is to approximate the modified biexponential signal in the time domain, \eqref{eq:completesq}, in terms of the Hermite polynomials.
We have described this procedure in Section~\ref{sec:leastsq} (cf. \eqref{eq:coeffIntegral} and \eqref{eq:FindCoeff_SoE}). If $\nu$ is the Gauss quadrature measure, then $\mathbf{G}$ is the identity matrix, so that, in the notation of Section~\ref{sec:leastsq}, $d_k=\hat{f}(\nu;k)$. 
 We performed Gaussian-Hermite quadrature using the Chebfun software \cite{Driscoll2014} to calculate the required approximation coefficients $\hat{f}(\nu;k)$. 
In the preliminary numerical experiments presented below, we concentrated on the case $m=n=32$. Table \ref{CoeffExperiment} demonstrates the effect of using different values of $n$ in the presence of various SNR values. 

Having established an approximation for \eqref{eq:completesq}, we proceeded to find an approximation to its Fourier transform, \eqref{eq:fundarelation}.

As indicated in \eqref{eq:leastsqexpression} the approximation of this FT follows from \eqref{eq:hermitefour}, expressing the property of the Hermite functions as eigenfunction of the FT, with eigenvalues $(i)^{degree}$. We now filter this signal using a custom-designed finite-support filter \eqref{eq:measuresigma} \cite{singdet, trigwave,loctrigwave} to reduce sidelobes in the frequency domain, with the goal of identifying dominant peaks at the positions of the desired $\lambda$ values. 

\begin{figure}[H]
  \begin{minipage}[b]{0.5\textwidth}
    \includegraphics[width=\textwidth]{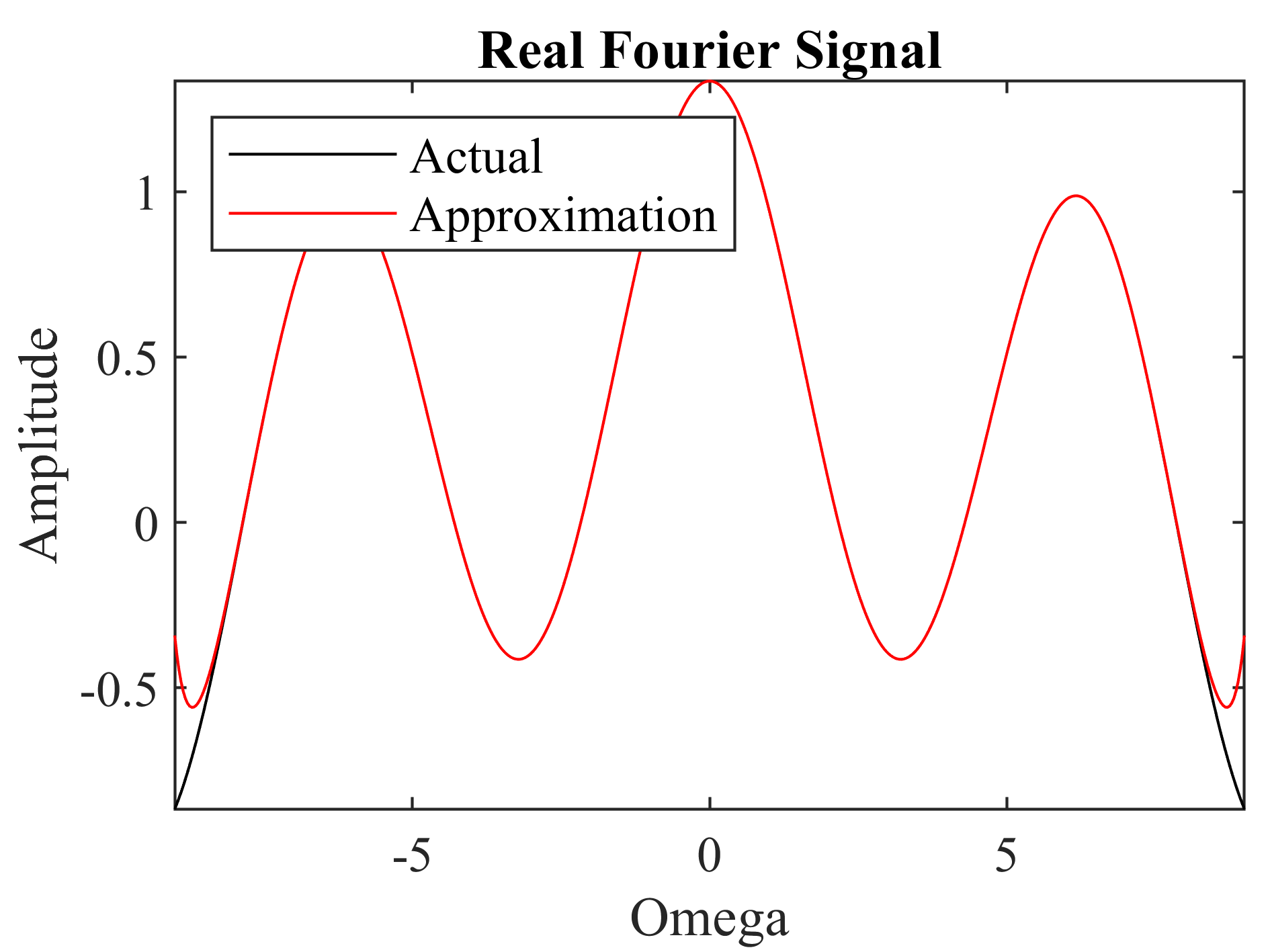}
  \end{minipage}
  \begin{minipage}[b]{0.5\textwidth}
    \includegraphics[width=\textwidth]{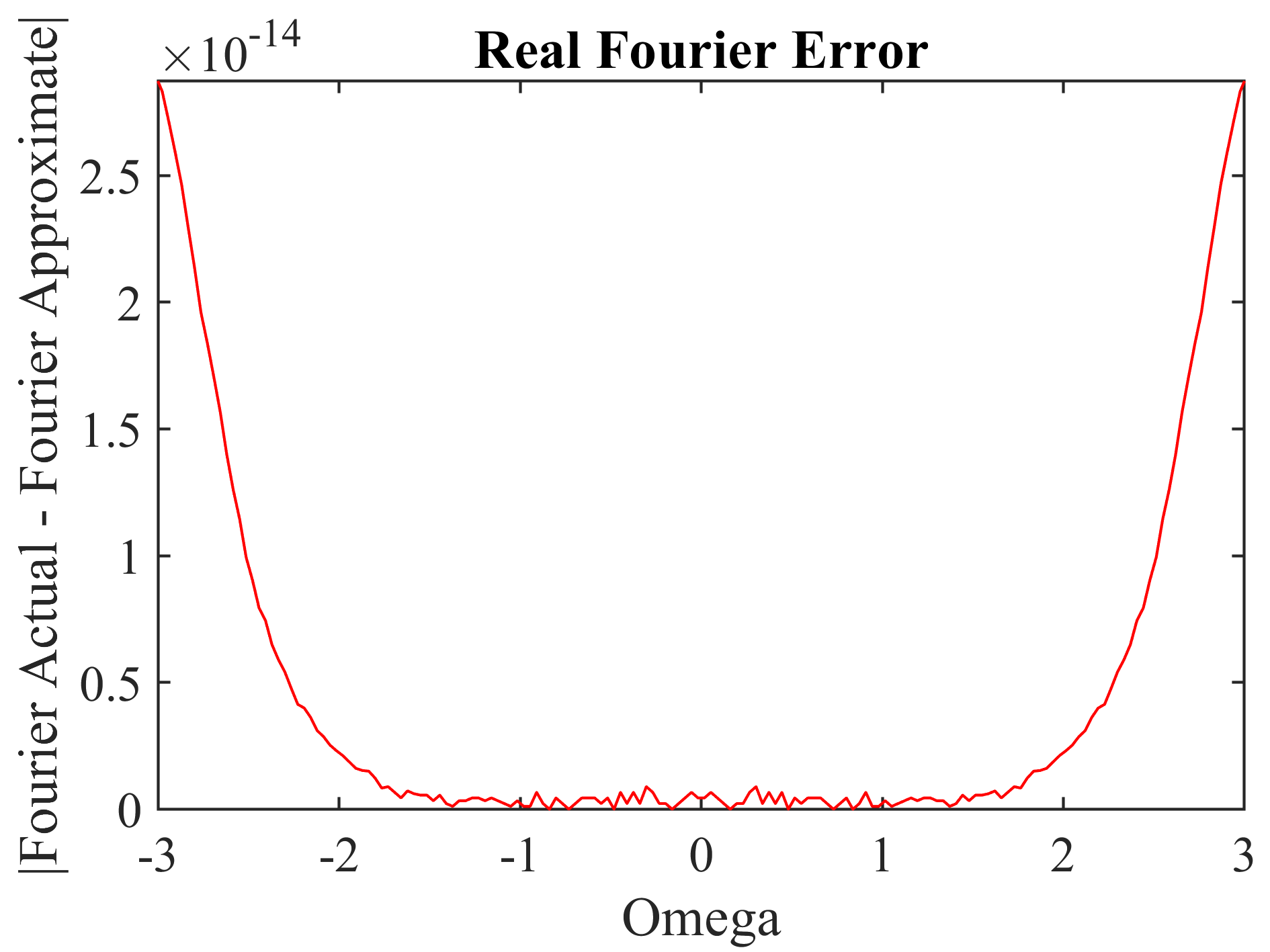}
  \end{minipage}
  \caption{The real part of the Fourier transform of the modified biexponential signal \eqref{eq:fundarelation}and its approximation through expansion with Hermite polynomials \eqref{eq:leastsqexpression}. The same signal parameters detailed in Figure \ref{fig:modgauss} are used. The fidelity of this approximation to the FT of the signal supports our attempt to extract a $\lambda$ value. However, the error in our approximation increases rapidly outside of a relatively small frequency window (right-hand panel).}
  \label{FourierFit}
\end{figure}
However, we found that for realistic values of $\lambda$ arising in MRI physics, the large range of $\omega$ values in \eqref{eq:fundarelation} required for resolution rendered this unworkable (Figs. \ref{fig:modgauss}, \ref{FourierFit}). However, as an alternative, the signal shift described above enabled us to identify the $\lambda$ corresponding to the component with the greater of the two $\lambda$'s; the other component was substantially more attenuated by the multiplicative exponential introduced in \eqref{eq:completesq}. Our approach then was to evaluate successive shifts to identify this dominant peak without the need for numerical optimization (e.g. nonlinear least squares analysis), thereby reducing the optimization problem from its original form with four unknowns, two of which enter nonlinearly, to three unknowns, one of which enters nonlinearly. 
  
Following estimation of the larger of the two time constants, which by convention we denote by $T_{22}$, we now complete our L2F method by estimating the remaining three parameters using NLLS. The reduction in dimensionality from four to three through the above procedure is expected to result in improved estimation of the remaining parameters. Thus, $T_{22}$ is inserted as a fixed parameter into the NLLS objective function and the remaining parameters, $A_1$,$A_2$, and $T_{21}$, are estimated using any one of the numerous available methods. We used the MATLAB lsqnonlin function, based on the trust region reflective algorithm with initial guesses for the $a_k$ taken as random values between 0 and 1 based on their interpretation as component fractions that sum to one. The initial guess for $T_{21}$ was set to a random value between 0ms and 300ms based on known properties of myelin-constrained water in MRI. Feasible set constraints were set between 0 and 1 for coefficients and 1 and 300ms for $T_{21}$. For the native four-parameter NLLS estimation problem, the same constraints and initial guesses used for $T_{21}$ were implemented for $T_{22}$; again, for L2F, $T_{22}$ was obtained from peak-finding without any other least-squares procedure.

The approximation of \eqref{eq:completesq} using an expansion with Hermite polynomials is shown in Figure \ref{fig:modgauss}. As seen, the Gauss-Hermite quadrature approach performs extremely well for approximation of the time-domain signal. This provides the basis for using of these coefficients to approximate the Fourier transform of equation \eqref{eq:completesq}.

\begin{figure}[H]
  \begin{minipage}[b]{0.5\textwidth}\includegraphics[width=\textwidth]{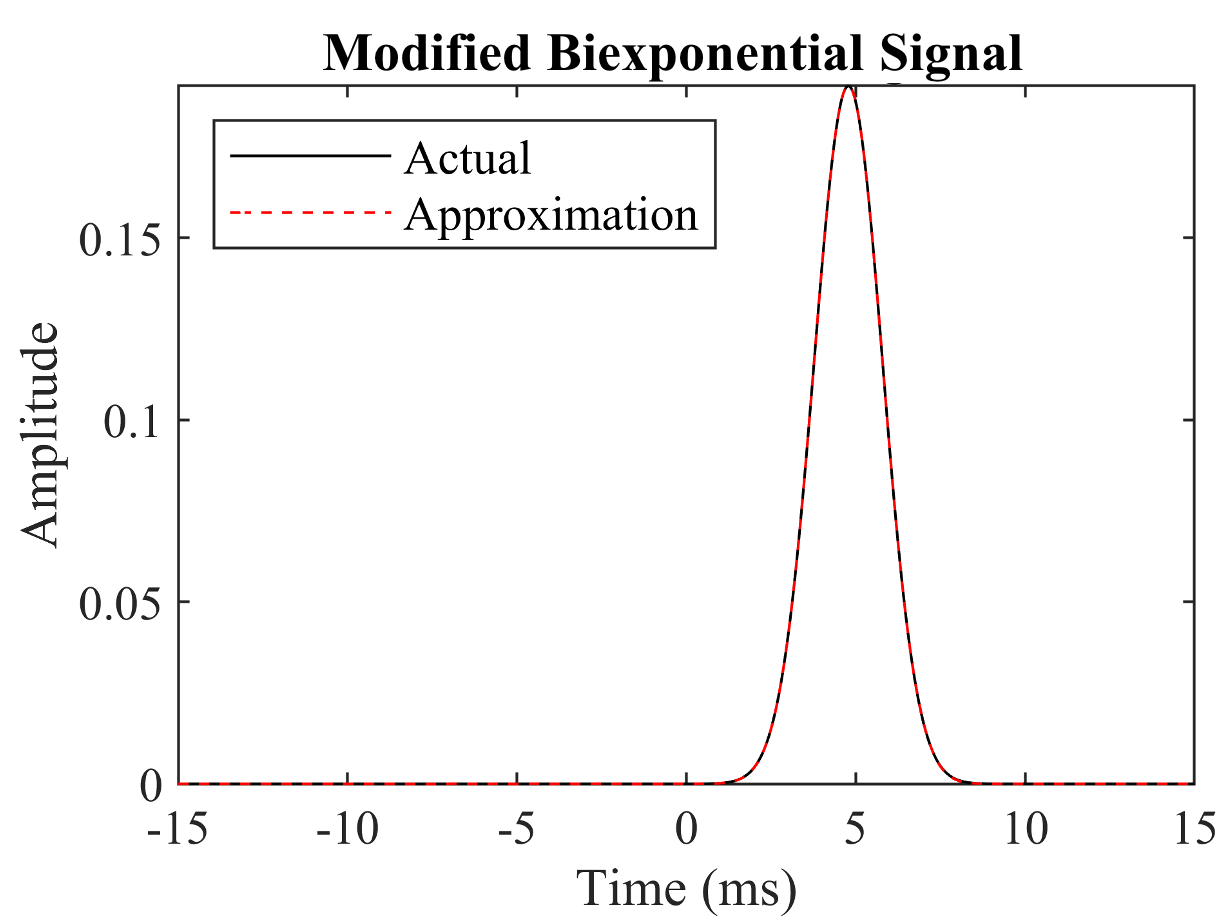} 
  \end{minipage}
  \begin{minipage}[b]{0.5\textwidth}
    \includegraphics[width=\textwidth]{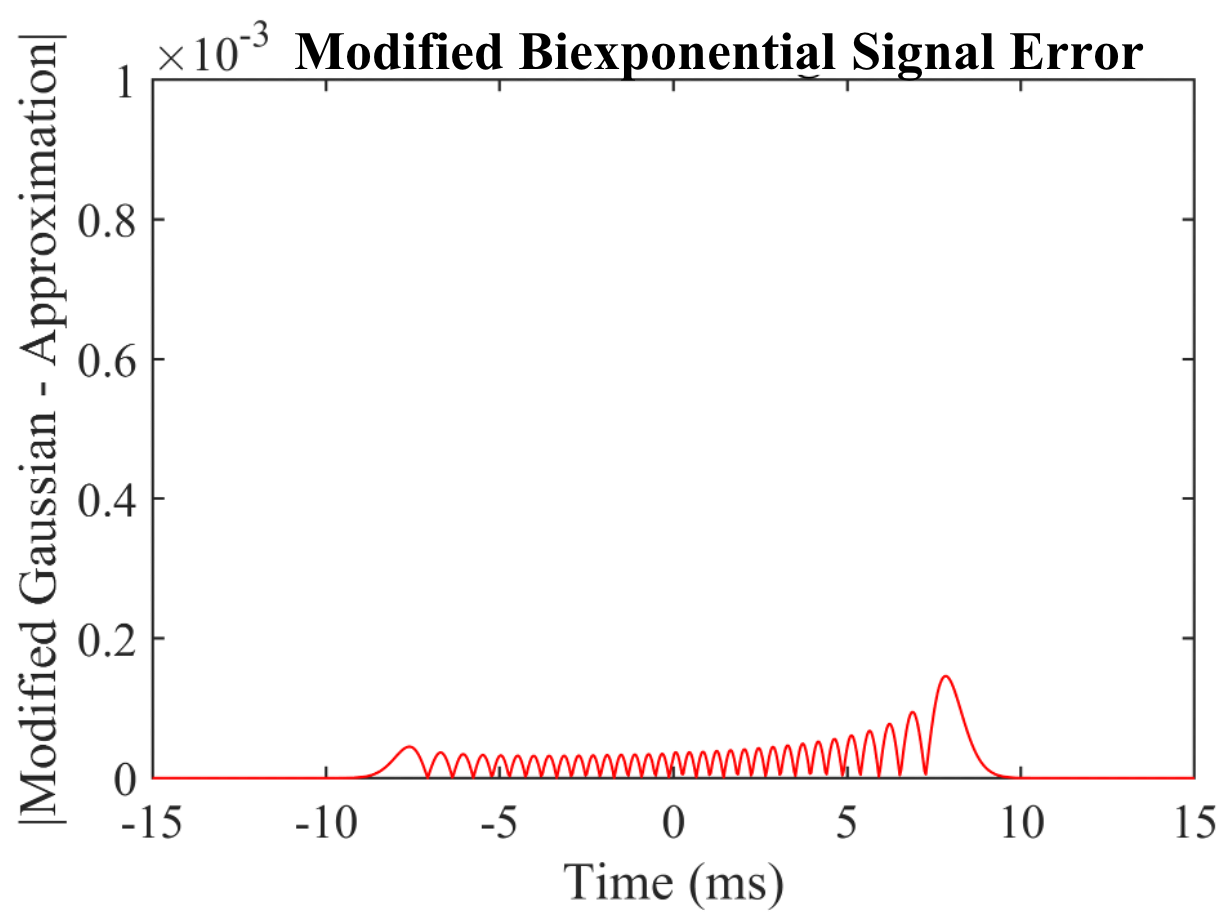}
  \end{minipage}
  \caption{Approximation of a modified biexponential \eqref{eq:completesq} decay curve with parameters: $A_1=0.5$, 
  $A_2=0.5$, $T_{21}=10$ms, $T_{22}=50$ms. The error for this approximation, shown in the right panel, remains small throughout the signal duration. }
  \label{fig:modgauss}
\end{figure}

The Fourier transform of the modified signal is presented in \eqref{eq:fundarelation}. The approximation of this signal, \eqref{eq:leastsqexpression}, is  nearly indistinguishable from the original in as seen in the difference plot in Fig. \ref{FourierFit}, right-hand panel. We find that the approximation maintains fidelity for frequencies in the domain of roughly [-6,6], although the displayed range in the difference plot (right-hand panel) is [-3,3] in order to preserve detail in the region about zero. Similar results were obtained for the imaginary part of the FT. 
The restricted range over which the approximation to the FT remains accurate presents severe constraints on our ability to resolve closely spaced signals. As an alternative, we make use of the signal shift as described in \eqref{eq:orignalsignal} to attenuate the amplitude of the peak corresponding to the smaller value of $\lambda$, which is $\lambda_2$ according to our convention in which $T_{22}>T_{21}$, so that the value of $\lambda_1$ can be readily identified as the dominant remaining peak in the FT. A visualization of this procedure, which is detailed in Section \ref{sec:trigkernel}, is shown in Figure \ref{fig:lambdaextraction}.

\begin{figure}[H]
    \centering \includegraphics[width=0.6\textwidth]{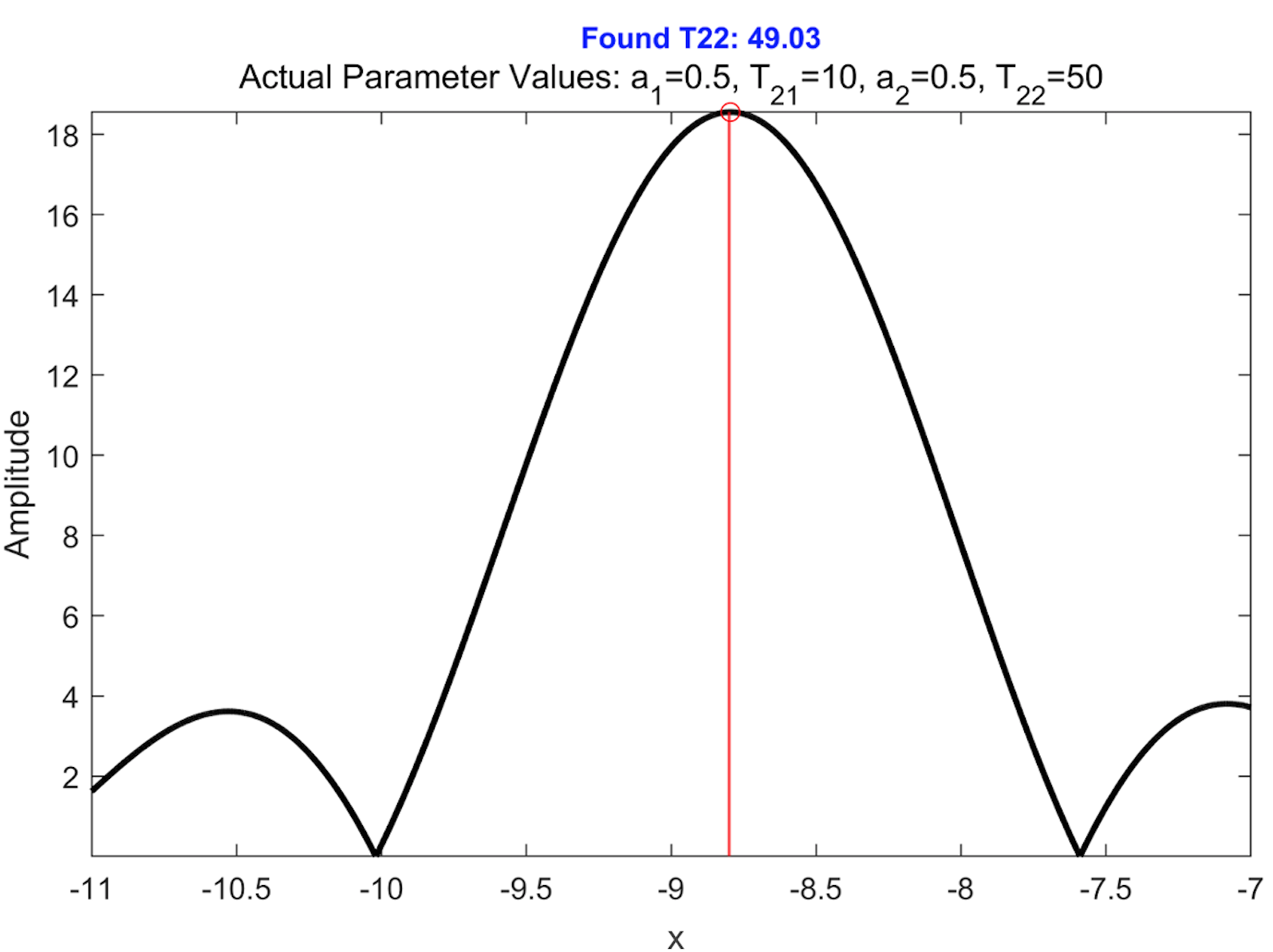}
    \caption{Illustration of $T_{22}$ estimation.The same signal parameters detailed in Figure \ref{fig:modgauss} are used. The vertical red line shows the theoretical value of $\lambda_1$ in terms of $x$ (see Section A) while the red circle shows the location of the point of maximum value (the estimated value of $\lambda$). For these parameters, a $T_{22}$ value of approximately 49.03 was found, close to the correct underlying value of 50.}
    \centering
    \label{fig:lambdaextraction}
\end{figure}

Further numerical results were generated for time decay constants which are more closely spaced, and hence more problematic to extract (see Table \ref{table4060} and Table \ref{table5060}). To model experimental data, we performed our analysis with different levels of Gaussian noise added to the original biexponential signal. Thus, results in these tables were obtained by adding noise to a biexponential signal, with SNR defined as the signal peak (initial) value divided by the standard deviation of the added Gaussian noise. Parameters were then extracted from this signal using the NLLS and L2F procedures described above. This process was performed for noiseless signals as well as over 500 noise realizations for SNR values of $10^6$, $10^5$, and $10^4$. The mean, standard deviation (StDev), and root mean squared error (RMSE) with respect to the correct underlying value for each estimated parameter across noise realizations are shown in the Tables.  
\vspace{-8pt}
\begin{table}[H]
    \includegraphics[width=\textwidth]{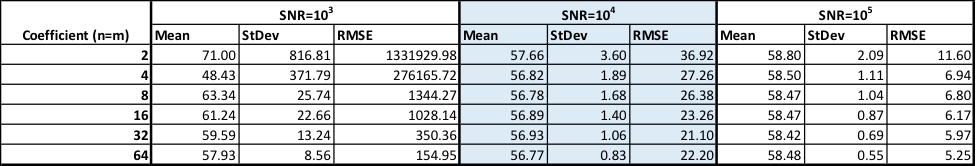}
    \caption{Estimation of $T_{22}$ for varying levels of noise, with underlying values of $T_{21}=40$ms, $T_{22}=60$ms, and $A_1$=$A_2$=0.5. The mean, standard deviation (StDev), and root mean squared error (RMSE) for $T_{22}$ are calculated from 500 independent noise realizations. Results are shown for three values of SNR. Results improved overall with increasing values of n and m up to $m=n=32$ or $64$. A value of 32 was selected for these preliminary numerical demonstrations.}
    \label{CoeffExperiment}
\end{table}
Table 1 shows results for decay constants of $T_{21}=40$ms and $T_{22}=60$ms, with  $A_1 = A_2=0.5$. For the noiseless signal, there is no variability in the prediction of $T_{22}$ using the L2F method. Variability of L2F's remaining parameters-as well as all of the NLLS predictions-is attributable to the randomness of initial guesses and the presence of local minima in the optimization procedure used to obtain those estimates. In the noiseless case and the nearly-noiseless case of SNR=$10^6$, our results are superior to those obtained from NLLS.  However, with increasing noise, error in the L2F method increases rapidly due to the extent of shifting required to extract a single decay constant; recall that, due to of the rapidly decaying exponential in \eqref{eq:completesq}, this shift attenuates the amplitude of the peak corresponding to the larger of the two $\lambda$'s (i.e. the smaller of the two $T_{2}$'s), permitting the identification of the smaller $\lambda$ (i.e. the larger $T_2$). Given that shifting also attenuates the peak related to the smaller $\lambda$ (though not to the same extent as the larger $\lambda$), we see that introducing too much of a shift causes noise in the signal to become dominant.

\begin{table}[H]
    \includegraphics[width=\textwidth]{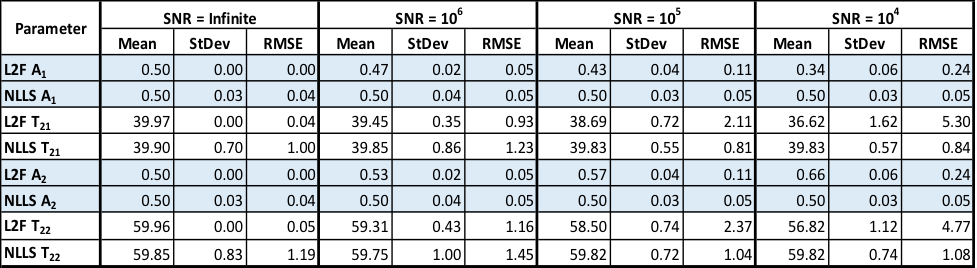}
    \caption{Parameter estimation for varying levels of noise, where $T_{21}=40$ms, $T_{22}=60$ms, and $A_1$=$A_2$=0.5. The mean, standard deviation (StDev), and root mean squared error (RMSE) for each parameter are calculated from 500 independent noise realizations. Results are shown for four values of SNR. In the leftmost column, we see that for a noisless signal the L2F method has smaller StDev and RMSE results compared to NLLS for all four parameters-indicating the superiority of this method for this ideal case. Performance of L2F is seen to decrease even for the very high SNR value of SNR=$10^6$.For SNR $\le$  $10^5$, L2F is seen to perform worse than NLLS for all parameters, as shown by the corresponding values of StDev and RMSE.}
    \label{table4060}
\end{table}

\begin{table}[H]
    \includegraphics[width=\textwidth]{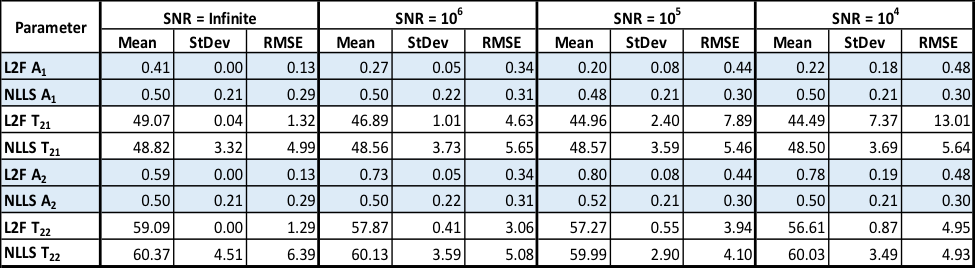}
    \caption{Parameter estimation for varying levels of noise, where $T_{21}=50$ms, $T_{22}=60$ms, and $A_1$=$A_2$=0.5. Abbreviations are the same as those in Table \ref{table4060}. In the case of a noisless signal, for all parameters, L2F outperforms NLLS in terms of StDev and RMSE values. As noise increases, L2F performance worsens, due to the increased StDev of the initial $T_{22}$ estimate that is the basis for the method.}
    \label{table5060}
\end{table}

It is notable that NLLS performs essentially equivalently across all of these levels of noise, all of which represent signals with very high SNR. However, while the noiseless behavior of L2F, and its behavior for SNR = $10^6$, were superior to NLLS, L2F exhibited more rapid degradation of overall performance as SNR decreased. We attribute this decline in performance to the increasing variability in the estimation of $T_{22}$. We see that in the noiseless case, there is no variability in the $T_{22}$ prediction. However, upon the introduction of even a small amount of noise, L2F's estimation of $T_{22}$ exhibits considerable variability, as seen from the given StDev values for the estimates. Recall that the remaining parameters are estimated using the fixed value of $T_{22}$ recovered from L2F, so that variability in the latter leads variability in these remaining parameters:$A_1$, $A_2$, and $T_{21}$. Viewing \ref{table4060}, as SNR decreases, standard deviation and RMSE values are fairly constant for the NLLS method when comparing the results of parameter estimation at high and low SNRs. Performing the same comparison with the L2F method, we observe that while L2F outperforms NLLS for the highest values of SNR, its peformance degrades and is not as robust as NLLS for noisier signals.

Table \ref{table5060} presents the same analysis for a case in which the decay constants of the two signal components are yet more closely spaced:$T_{21}=50$ms and $T_{22}=60$ms. Again, the L2F approach outperforms NLLS for noiseless signals and for near-infinite SNR due to its stable prediction of the $T_{22}$ parameter. This indicated the theoretical efficacy of L2F even in the regime where NLLS is unable to effectively distinguish between closely-spaced decay constants. As noise increases, the increasing StDev of L2F's $T_{22}$ prediction reduces the performance of this method. $T_{21}$ appears to suffer the most and the coefficients $A_1$ and $A_2$ are also affected. Continuing our analysis of the SNR=$10^4$ case, we see that while the StDevs of these coefficients are smaller than those predicted by NLLS, bias in the values of these predictions is the driver of their poor performance with the L2F method. 

Finally, we recognize that the multiexponential model we are working with in this paper represents a potential oversimplification of the signal model in an actual MRI experiment.  In particular, the quantification of the estimated parameters, including in  the related method of T2 relaxation spectrum analysis mentioned above in the Introduction, depends on several complexities in the signal generation process and has been explored in previous work\cite{Akhondi,Kirtil2016,Kumar2016, Mehdizadeh2022,Wiggerman2020} . However, while such experiments, and in particular applications to neuroscience, form the basis for our motivation in this study, we are here  concerned with the classic, and widely-applicable, multiexponential signal as written in \eqref{eq:observations} and as interpreted throughout our manuscript.  

\section{Proofs}\label{sec:proofs}

In order to prove Theorem~\ref{theo:fourdegapprox}, we need some preparation. 
 First, we recall some facts from the theory of weighted polynomial approximation.
 Parts (a) and (b) are given (in a greater generality) in \cite[Theorem~3.2.5]{mhasbk}. Part (c) follows from Proposition~\ref{prop:mrs} and \eqref{eq:hermitechristbd}.  Part (d) is easy to deduce using \cite[Thoerem~3.4.2(b), Lemma~3.4.4(b)]{mhasbk}. 
 
\begin{lemma}\label{lemma:hermitelemma}
Let $n\ge 1$ be an integer.\\
\noindent
{\rm (a)} We have
\be\label{eq:hermitechristbd}
K_n(x,x)=\sum_{j=0}^{n-1} \psi_j(x)^2   \ls n^{1/2}, \qquad x\in\RR.
\ee
{\rm (b)}
There exists $C>0$ such that for $|x|\le Cn^{1/2}(1+c/n)$,
\be\label{eq:christlowbd}
K_n(x,x)\gs \sqrt{n}.
\ee
{\rm (c)} If  $|y|>2n^{1/2}$, then
\be\label{eq:mrshatkn}
|\hat{K}_n(w,y)|\ls\exp(-cn), \qquad w\in\RR.
\ee
{\rm (d)} For any $P\in \Pi_n$, $1\le p\le\infty$, we have the Bernstein inequality:
\be\label{eq:bernstein}
\|P'\|_p \ls \sqrt{n}\|P\|_p.
\ee
\end{lemma}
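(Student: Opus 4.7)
The lemma bundles four statements, three of which---(a), (b), and (d)---the authors explicitly attribute to material already developed in \cite{mhasbk}. My plan is therefore to record those citations and to concentrate the actual argument on part (c), which is the only piece the paper flags as requiring an internal derivation (from Proposition~\ref{prop:mrs} together with \eqref{eq:hermitechristbd}).

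For parts (a) and (b), one appeals to \cite[Theorem~3.2.5]{mhasbk}, which establishes two-sided Christoffel-function estimates $K_n(x,x)\sim\sqrt{n}$ in the bulk $|x|\ls\sqrt{n}$ together with an $O(\sqrt{n})$ upper bound everywhere, for a class of Freud-type weights containing the Hermite weight $\exp(-x^2/2)$. After specialization, part (a) is exactly the upper bound and part (b) is the matching lower bound on the specified region. For part (d), the Bernstein inequality $\|P'\|_p\ls\sqrt{n}\|P\|_p$ for $P\in\Pi_n$ is the Hermite-weight case of the Markov--Bernstein estimates assembled in \cite[Theorem~3.4.2(b), Lemma~3.4.4(b)]{mhasbk}; no additional work is needed beyond invoking these.

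The substantive step is part (c). Applying $\mathfrak{F}$ in the first variable to the reproducing kernel \eqref{eq:hermite_reprod_kern} and using the eigenfunction identity \eqref{eq:hermitefour},
\begin{equation*}
\hat K_n(w,y)\;=\;\sum_{k=0}^{n-1}(-i)^k\psi_k(w)\psi_k(y).
\end{equation*}
The Cauchy--Schwarz inequality in the summation index then yields
\begin{equation*}
|\hat K_n(w,y)|^{2}\;\le\;K_n(w,w)\cdot K_n(y,y),
\end{equation*}
and part (a) already controls the first factor by $\ls\sqrt{n}$ uniformly in $w$. It therefore remains to establish $K_n(y,y)\ls\exp(-cn)$ for $|y|>2\sqrt{n}$. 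I would fix once and for all some $\delta\in(0,\sqrt{2}-1)$, which guarantees $\sqrt{2n}(1+\delta)<2\sqrt{n}$ for every $n\ge 1$, and apply Proposition~\ref{prop:mrs} with $p=\infty$ to each individual $\psi_k\in\Pi_n$. Because $|y|$ lies outside $[-\sqrt{2n}(1+\delta),\sqrt{2n}(1+\delta)]$, this gives $|\psi_k(y)|\le\exp(-cn)\|\psi_k\|_\infty\ls\exp(-cn)$, where I invoke the standard uniform bound on the Hermite functions. Squaring and summing over $k<n$ absorbs the resulting factor $n$ into a slightly smaller exponential, and combining the two factors in the Cauchy--Schwarz estimate produces $|\hat K_n(w,y)|\ls n^{3/4}\exp(-cn)\ls\exp(-c'n)$, which is the claim.

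The only obstacle I foresee is bookkeeping the constants: one must fix $\delta$ so that the strict inequality $\sqrt{2n}(1+\delta)<2\sqrt{n}$ holds for every admissible $n$, verify that the constant $c$ produced by Proposition~\ref{prop:mrs} depends only on this fixed $\delta$, and finally absorb the polynomial prefactors $\sqrt{n}\cdot n$ into an exponential of the form $\exp(-cn)$ with a uniform (possibly reduced) constant. None of these is truly delicate, but they must be tracked carefully so that the final estimate emerges with a single constant $c$ as stated.
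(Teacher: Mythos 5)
Your proposal is correct and follows essentially the route the paper indicates: parts (a), (b), (d) are cited from \cite{mhasbk} exactly as the authors do, and your argument for part (c) — Cauchy--Schwarz on the sum defining $\hat{K}_n(w,y)$, the bound \eqref{eq:hermitechristbd} in the $w$-variable, and Proposition~\ref{prop:mrs} with a fixed $\delta<\sqrt{2}-1$ to force exponential smallness for $|y|>2\sqrt{n}$ — is precisely the combination the paper asserts, differing only in that you apply Proposition~\ref{prop:mrs} termwise to each $\psi_k\in\Pi_n$ rather than to $y\mapsto\hat{K}_n(w,y)\in\Pi_n$ as a whole, which is an immaterial variation.
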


\begin{lemma}\label{lemma:ballmeasure}
Let $m\ge 1$ be an integer, and $\nu\in SZM(m)$. With $C$ as in Lemma~\ref{lemma:hermitelemma}(b), we have for $|x|\le C\sqrt{m}$, and for some constant $C_1>0$,
\be\label{eq:ballmeasure}
\nu([x-C_1/\sqrt{m},x+C_1/\sqrt{m}]) \ls m^{-1/2}.
\ee
\end{lemma}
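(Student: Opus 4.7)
The plan is to exploit the Christoffel--Darboux kernel $K_m(\cdot,x)$ (restricted to $\Pi_m$) as a test function that is simultaneously large on the ball around $x$ and controlled in the $L^2$ sense, then feed it into the SMZ equivalence \eqref{eq:strongsmz}.

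First I would fix $|x|\le C\sqrt{m}$ and set $P(y)=K_m(y,x)=\sum_{j=0}^{m-1}\psi_j(y)\psi_j(x)$. By construction $P\in\Pi_m$. By Cauchy--Schwarz in the sum and \eqref{eq:hermitechristbd}, $\|P\|_\infty\le \sup_y K_m(y,y)^{1/2}K_m(x,x)^{1/2}\ls m^{1/2}$. On the other hand, \eqref{eq:christlowbd} gives the lower bound $P(x)=K_m(x,x)\gs m^{1/2}$.

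Next I would use the Bernstein inequality \eqref{eq:bernstein} to deduce $\|P'\|_\infty\ls \sqrt{m}\,\|P\|_\infty\ls m$, so that for any $y$ with $|y-x|\le C_1/\sqrt{m}$,
\begin{equation*}
|P(y)-P(x)|\le \frac{C_1}{\sqrt{m}}\,\|P'\|_\infty \le c\,C_1\,\sqrt{m}.
\end{equation*}
Choosing $C_1$ small enough (independent of $m$ and $x$), this perturbation is dominated by the lower bound on $P(x)$, so $|P(y)|\gs \sqrt{m}$ throughout the interval $I_x:=[x-C_1/\sqrt{m},\,x+C_1/\sqrt{m}]$.

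Finally I would apply the SMZ equivalence \eqref{eq:strongsmz} to $P\in\Pi_m$ together with the reproducing identity $\int |K_m(y,x)|^2\,dy = K_m(x,x)\ls\sqrt{m}$:
\begin{equation*}
m\cdot \nu(I_x)\ls \int_{I_x}|P|^2\,d\nu \le \int_{\RR}|P|^2\,d\nu \sim \int_{\RR}|P(y)|^2\,dy = K_m(x,x)\ls \sqrt{m},
\end{equation*}
which rearranges to $\nu(I_x)\ls m^{-1/2}$, as desired. The only delicate step is the selection of the absolute constant $C_1$: it must be small enough that the Bernstein-based variation of $P$ on $I_x$ does not eat up the Christoffel lower bound, but this is a fixed choice that works uniformly for all admissible $x$ and $m$, so I do not anticipate a genuine obstacle beyond careful bookkeeping of the implied constants inherited from Lemma~\ref{lemma:hermitelemma}.
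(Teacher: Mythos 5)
Your proposal is correct and follows essentially the same route as the paper: test the SMZ equivalence \eqref{eq:strongsmz} against $P=K_m(\cdot,x)\in\Pi_m$, use the Christoffel bounds \eqref{eq:hermitechristbd}--\eqref{eq:christlowbd} together with the Bernstein inequality \eqref{eq:bernstein} to show $|K_m(y,x)|\gs\sqrt{m}$ on the interval of radius $C_1/\sqrt{m}$ for small $C_1$, and then combine with the reproducing identity $\int K_m(y,x)^2\,dy=K_m(x,x)\ls\sqrt{m}$. The only cosmetic difference is that the paper bounds the oscillation $|K_m(x,y)-K_m(x,x)|$ by $\sqrt{m}\,K_m(x,x)\,|x-y|$ to get a two-sided comparison with $K_m(x,x)$, whereas you bound it by an absolute multiple of $\sqrt{m}$; both yield the same conclusion.
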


\begin{proof}\ % of Lemma~\ref{lemma:ballmeasure}
Let $|x|\le C\sqrt{m}$.  
Using the Bernstein inequality \eqref{eq:bernstein}, the Schwarz inequality,  \eqref{eq:hermitechristbd}, and \eqref{eq:christlowbd}, we see that
$$
\begin{aligned}
|K_m(x,y)-K_m(x,x)|&\le |x-y|\|K_m'(x,\circ)\|_\infty \ls \sqrt{m}|x-y|\|K_m(x,\circ)\|_\infty\\
& \ls \sqrt{m}|x-y|\left\{\sum_{k=0}^{m-1}\psi_k(x)^2\right\}^{1/2}\left\{\max_{y\in\RR}\sum_{k=0}^{m-1}\psi_k(y)^2\right\}^{1/2}\\
&\ls m|x-y|K_m(x,x)^{1/2} \ls \sqrt{m}K_m(x,x)|x-y|.
\end{aligned}
$$
Consequently, for a judiciously chosen $C_1>0$,  we have for $|x-y|\le C_1/\sqrt{m}$: 
\be\label{eq:pf3eqn1}
\sqrt{m}\ls (1/2)K_m(x,x)\le K_m(x,y)\le (3/2)K_m(x,x)\ls \sqrt{m}.
\ee
Hence, using \eqref{eq:strongsmz} and \eqref{eq:hermitechristbd} again,
\be\label{eq:pf3eqn2}
\begin{aligned}
m\nu([x-C_1/\sqrt{m},x+C_1/\sqrt{m}])&\le \int_{x-C_1/\sqrt{m}}^{x+C_1/\sqrt{m}}K_m(x,y)^2d\nu(y)\le \int_\RR K_m(x,y)^2d\nu(y)\\
&\ls \int_\RR K_m(x,y)^2dy=K_m(x,x)\ls \sqrt{m}.
\end{aligned}
\ee
This proves  \eqref{eq:ballmeasure}.
\end{proof}

The following lemma establishes bounds on the integral norms of the kernels defined in \eqref{eq:darboux} and \eqref{eq:fourdarboux}.
\begin{lemma}\label{lemma:kernelbds}
Let $m\ge n\ge 1$ be  integers, such that $C^2m\ge 4n$, where $C$ as in Lemma~\ref{lemma:hermitelemma}(b),  and $\nu\in SMZ(n)$. Then for every $x\in\RR$,
\be\label{eq:christbd}
\sum_{j=0}^{n-1} |\tilde{p}_j(x)|^2\sim \sum_{j=0}^{n-1} \psi_j(x)^2   \ls n^{1/2}, \quad 
\ee
\be\label{eq:fourchristbd}
\int |\hat{K}_n(\nu;x,y)|^2d\nu(y)=\sum_{j=0}^{n-1} |\mathfrak{F}(\tilde{p}_j)(x)|^2 \sim \sum_{j=0}^{n-1} \psi_j(x)^2   \ls n^{1/2}.
\ee
Moreover,
\be\label{eq:kernelbds}
\int |K_n(\nu;x,y)|d\nu(y)  \ls n^{1/2}, 
\ee
\be\label{eq:kernbd}
\int |K_n(x,y)|d\nu(y)\ls n^{1/2}.
\ee
Since the Lebesgue measure satisfies \eqref{eq:strongsmz} for every $n$, we have
\be\label{eq:lebesgueconst}
\max\left(\int |K_n(x,y)|dy, \int |K_n(\nu;x,y)|dy\right) \ls n^{1/2}, \qquad n=1,2,\cdots.
\ee
Moreover
\be\label{eq:fourkernbds}
\max\left(\int |\hat{K}_n(\nu;x,y)|dy, \int |\hat{K}_n(\nu;x,y)|d\nu(y)\right) \ls n^{1/2}
\ee
\end{lemma}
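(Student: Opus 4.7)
The overall strategy is to establish the two Christoffel-type diagonal bounds \eqref{eq:christbd}--\eqref{eq:fourchristbd} algebraically first, and then derive the four $L^1$-type bounds \eqref{eq:kernelbds}--\eqref{eq:fourkernbds} from these via a center/tail split: Cauchy--Schwarz on $[-2\sqrt n, 2\sqrt n]$ combined with Proposition~\ref{prop:mrs} on the complement.

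For \eqref{eq:christbd}, the second representation of $K_n(\nu;x,y)$ in \eqref{eq:darboux} gives $K_n(\nu;x,x) = \mathbf{v}(x)^T(\mathbf{G}^{-1}|_{n\times n})\mathbf{v}(x)$ with $\mathbf{v}(x) = (\psi_0(x),\dots,\psi_{n-1}(x))^T$. By \eqref{eq:invGnorm} the Gram matrix $\mathbf{G}$ has uniformly bounded singular values, so $\mathbf{G}^{-1}$ does as well, and Cauchy interlacing guarantees the same uniform bounds for its leading $n\times n$ principal submatrix. Hence $K_n(\nu;x,x) \sim \sum_{\ell<n}\psi_\ell(x)^2 \ls n^{1/2}$ by \eqref{eq:hermitechristbd}. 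For \eqref{eq:fourchristbd}, applying \eqref{eq:nuorthorel} to the expansion \eqref{eq:fourdarboux} gives $\int |\hat{K}_n(\nu;x,y)|^2 d\nu(y) = \sum_{j<n}|\mathfrak{F}(\tilde{p}_j)(x)|^2$. Using \eqref{eq:hermitefour} one has $\mathfrak{F}(\tilde{p}_j) = \sum_\ell \mathbf{L}_{\ell,j}(-i)^\ell\psi_\ell$, so the same bookkeeping identifies the sum with $(D\mathbf{v}(x))^*(\mathbf{G}^{-1}|_{n\times n})(D\mathbf{v}(x))$ where $D = \mathrm{diag}((-i)^\ell)$ is unitary diagonal; unitary conjugation preserves singular values, giving again $\sim \|\mathbf{v}(x)\|^2 \ls n^{1/2}$.

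The four $L^1$ bounds all follow the common template $\int |H(x,y)|\,d\mu(y) \le \mu([-2\sqrt n, 2\sqrt n])^{1/2}(\int |H|^2 d\mu)^{1/2}$ plus an exterior remainder, where $H$ ranges over the three kernels and $\mu$ over $\nu$ and Lebesgue measure. The $L^2(\mu)$ factor is $\ls n^{1/4}$ in every case: when $\mu = \nu$ this is step~1; when $H = K_n$ and $\mu$ is Lebesgue it is the classical reproducing identity $\int K_n(x,y)^2 dy = K_n(x,x)$; for cross terms, $H(x,\cdot)\in\Pi_n\subseteq\Pi_m$ together with the MZ property \eqref{eq:strongsmz} relates $L^2(dy)$ to $L^2(d\nu)$. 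The measure factor is also $\ls n^{1/4}$: trivially for Lebesgue, and for $\nu$ by covering $[-2\sqrt n, 2\sqrt n]$ with $\O(\sqrt{nm})$ intervals of length $C_1/\sqrt m$ and invoking Lemma~\ref{lemma:ballmeasure}, which is available precisely because the hypothesis $C^2m\ge 4n$ yields $2\sqrt n\le C\sqrt m$, to conclude $\nu([-2\sqrt n, 2\sqrt n]) \ls \sqrt n$. The central contribution is therefore $\ls n^{1/2}$ throughout. On the exterior $|y|>2\sqrt n$, every $H(x,\cdot)$ lies in $\Pi_n$ (for $\hat{K}_n$ this uses \eqref{eq:hermitefour}), so the $p=\infty$ case of \eqref{eq:mrsineq} together with the sup-norm bound $\|H(x,\cdot)\|_\infty \ls n^{1/2}$ (Cauchy--Schwarz on the relevant orthonormal expansion) gives the pointwise estimate $|H(x,y)| \ls n^{1/2}\exp(-cn)$. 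Multiplying by $\|\nu\|_{TV}\sim m^{1/2}$, or by the Lebesgue tail controlled by the $p=1$ version of \eqref{eq:mrsineq}, produces an exponentially small remainder absorbed by $n^{1/2}$.

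The main technical delicacy lies in transferring the condition-number bound from the full $m\times m$ matrix $\mathbf{G}^{-1}$ to its leading $n\times n$ principal block: this uses Cauchy interlacing together with the observation that, because $\mathbf{L}$ is lower triangular, $(\mathbf{L}|_{m\times n})(\mathbf{L}|_{m\times n})^T$ coincides with the top-left $n\times n$ block of $\mathbf{L}\mathbf{L}^* = \mathbf{G}^{-1}$, which is what justifies comparing the truncated sums $\sum_{j<n}|\tilde{p}_j(x)|^2$ and $\sum_{j<n}|\mathfrak{F}(\tilde{p}_j)(x)|^2$ directly to the Hermite Christoffel function. Once this is in place, the tail estimates are routine applications of Proposition~\ref{prop:mrs}, under the mild implicit assumption that $\|\nu\|_{TV}\sim m^{1/2}$ is dominated by $\exp(cn)$; if needed, a sharper pointwise Hermite decay can be invoked to eliminate this restriction.
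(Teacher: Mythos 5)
Your proposal follows essentially the same route as the paper: the diagonal (Christoffel-type) bounds come from the quadratic form of the Gram matrix via \eqref{eq:invGnorm}, and the $L^1$ bounds from Cauchy--Schwarz on $[-2\sqrt{n},2\sqrt{n}]$ (where Lemma~\ref{lemma:ballmeasure} gives $\nu([-2\sqrt{n},2\sqrt{n}])\ls \sqrt{n}$) together with Proposition~\ref{prop:mrs} for the exterior contribution. The extra care you take with the principal $n\times n$ block of $\mathbf{G}^{-1}$ and with requiring $\|\nu\|_{TV}\exp(-cn)\ls 1$ in the tail estimate addresses real, if minor, points that the paper's terser argument leaves implicit.
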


\begin{proof}
 In view of \eqref{eq:darboux},
\be\label{eq:pf1eqn1}
\sum_{j=0}^{n-1} |\tilde{p}_j(x)|^2=\sum_{\ell,\ell'=0}^{n-1}\mathbf{G}^{-1}_{\ell,\ell'}\psi_\ell(x)\psi_{\ell'}(x),
\ee
which is a quadratic form for $\mathbf{G}^{-1}$.
So, the estimate \eqref{eq:christbd} follows by using \eqref{eq:invGnorm} with $\psi_\ell(x)$ in place of $d_\ell$.
The second estimate in \eqref{eq:christbd} is the same as \eqref{eq:hermitechristbd}.
The estimates in \eqref{eq:fourchristbd} are obtained similarly, noting that
$$
\hat{K}(\nu;x,y)=\sum_{\ell,\ell'=0}^{n-1}\mathbf{G}^{-1}_{\ell,\ell'}(-i)^\ell \psi_\ell(x)\psi_{\ell'}(x)
$$
To prove \eqref{eq:kernelbds}, we note that since $2\sqrt{n}\le C\sqrt{m}$, Lemma~\ref{lemma:ballmeasure} shows that 
$$
 |\nu|([-2\sqrt{n},2\sqrt{n}])\ls \sqrt{nm}\sqrt{1/m}\ls \sqrt{n}.
$$
Hence,   using Proposition~\ref{prop:mrs}, with $K_n(\nu;x,y)$ in place of $P$ and $p=1$, followed by  Schwarz inequality and \eqref{eq:christbd},we deduce that
$$
\begin{aligned}
\int_\RR |K_n(\nu;x,y)|d|\nu|(y)&\ls \int_{-2\sqrt{n}}^{2\sqrt{n}} |K_n(\nu;x,y)|d|\nu|(y) \\
&\ls \left\{\int_{-2\sqrt{n}}^{2\sqrt{n}} |K_n(\nu;x,y)|^2d|\nu|(y)\right\}^{1/2}\left\{ |\nu|([-2\sqrt{n},2\sqrt{n}])\right\}^{1/2}\ls n^{1/2}.
\end{aligned}
$$
The estimates \eqref{eq:kernbd} (and hence, \eqref{eq:lebesgueconst}) follow similary. 
 The estimate \eqref{eq:fourkernbds} follows similarly as well, using  \eqref{eq:fourchristbd}) and \eqref{eq:nuorthorel}.
\end{proof}

We note that we are working with a least squares approximation, but require estimates in the uniform norm, while the Fourier transform of a function is bounded uniformly by the $L^1$ norm of the function.
To facilitate the transitions among these norms, we recall the following lemma \cite[Theorem~4.2.4]{mhasbk} relating the various norms for weighted polynomials.
\begin{lemma}\label{lemma:nikolskii}
For any integer $n\ge 1$ and $P\in \Pi_n$, we have
\be\label{eq:nikolskii}
\|P\|_1\ls n^{1/2}\|P\|_\infty\ls n\|P\|_1.
\ee
\end{lemma}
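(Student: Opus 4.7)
The plan is to prove the two inequalities separately, using tools already developed earlier in the paper. For the first inequality $\|P\|_1 \ls n^{1/2}\|P\|_\infty$, I would appeal directly to the Mhaskar--Rakhmanov--Saff-type estimate \eqref{eq:mrs} in Proposition~\ref{prop:mrs}: since $P \in \Pi_n$, we have $\|P\|_1 \sim \|P\|_{1,[-2\sqrt{n},2\sqrt{n}]}$, and then
$$
\|P\|_{1,[-2\sqrt{n},2\sqrt{n}]} \le 4\sqrt{n}\,\|P\|_{\infty,[-2\sqrt{n},2\sqrt{n}]} \le 4\sqrt{n}\,\|P\|_\infty,
$$
which gives the desired bound with an absolute constant. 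This step is essentially a trivial consequence of the fact that weighted polynomials of degree $<n$ concentrate on an interval of length $\sim\sqrt{n}$.

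For the second inequality $n^{1/2}\|P\|_\infty \ls n\|P\|_1$ (equivalently, $\|P\|_\infty \ls n^{1/2}\|P\|_1$), the natural tool is the reproducing kernel $K_n(x,y)=\sum_{k=0}^{n-1}\psi_k(x)\psi_k(y)$ from \eqref{eq:hermite_reprod_kern}. Since $\{\psi_k\}_{k=0}^{n-1}$ is an orthonormal basis of $\Pi_n$ in $L^2(\RR)$, we have the reproducing identity
$$
P(x) = \int_\RR K_n(x,y) P(y)\,dy, \qquad P\in\Pi_n, \ x\in\RR.
$$
Applying the Cauchy--Schwarz inequality pointwise in $y$ yields $|K_n(x,y)| \le K_n(x,x)^{1/2} K_n(y,y)^{1/2}$, and by Lemma~\ref{lemma:hermitelemma}(a) each factor is $\ls n^{1/4}$, so $\|K_n(x,\cdot)\|_\infty \ls n^{1/2}$ uniformly in $x$. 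Consequently
$$
|P(x)| \le \|K_n(x,\cdot)\|_\infty \|P\|_1 \ls n^{1/2}\|P\|_1,
$$
and taking the supremum over $x\in\RR$ yields $\|P\|_\infty \ls n^{1/2}\|P\|_1$, which is equivalent to the claimed second inequality.

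Both steps are short and rely only on results already in hand: Proposition~\ref{prop:mrs} for the first inequality and Lemma~\ref{lemma:hermitelemma}(a) together with the reproducing property for the second. There is no real obstacle; the only minor point requiring care is the correct application of \eqref{eq:mrs} to the $L^1$ norm (with the implied constant in $\sim$ absorbed into $\ls$) and verifying that the Cauchy--Schwarz bound on the kernel is uniform in both arguments. Combining the two chains of inequalities establishes \eqref{eq:nikolskii}.
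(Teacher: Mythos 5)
Your proof is correct. Note, however, that the paper does not actually prove this lemma: it simply cites \cite[Theorem~4.2.4]{mhasbk}, so your argument is a genuinely self-contained alternative built from results already stated in the text. The two halves are exactly the standard Nikolskii mechanism for weighted polynomials: the restricted-range inequality \eqref{eq:mrs} localizes the mass of $P\in\Pi_n$ to an interval of length $4\sqrt{n}$, which immediately gives $\|P\|_1\ls n^{1/2}\|P\|_\infty$; and the reproducing identity $P(x)=\int_\RR K_n(x,y)P(y)\,dy$ together with the Cauchy--Schwarz bound $|K_n(x,y)|\le K_n(x,x)^{1/2}K_n(y,y)^{1/2}\ls n^{1/2}$ (uniform in both arguments by \eqref{eq:hermitechristbd}) gives $\|P\|_\infty\ls n^{1/2}\|P\|_1$, which is the second inequality. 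Both applications are legitimate: $P$ is a polynomial times $\exp(-x^2/2)$, hence in $L^1\cap L^2$, so the reproducing property holds, and the implied constants from $\sim$ in \eqref{eq:mrs} are absorbed into $\ls$ as you say. The only cosmetic caveat is that \eqref{eq:mrs} is stated with a slight abuse of notation (the subscript $p$ is dropped on the right-hand side), but its intended meaning for $p=1$ is exactly what you use. Your route has the advantage of making the lemma verifiable from Proposition~\ref{prop:mrs} and Lemma~\ref{lemma:hermitelemma}(a) alone, without consulting the external reference.
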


Next, we obtain some constructions which approximate a function $f$ in different norms at the same time; i.e., constructions which do not depend upon the norm in which the degree of approximation is measured.
 It is not difficult to construct a near-best approximation to $f$ from $\Pi_n$.
 Let $n'=\lfloor n/2\rfloor$.
For $f\in L^1+L^\infty$, we write
$$
\hat{f}(k)=\int f(y)\psi_k(y)dy,
$$
$$
s_\ell(f)=\sum_{k=0}^{\ell-1}\hat{f}(k)\psi_k,
$$
and 
\be\label{eq:valleepoussin}
V_n(f)=\frac{1}{n-n'}\sum_{\ell=n'}^{n-1} s_\ell(f).
\ee
Part (a) of the following lemma is proved in \cite[Lemma~4.1.5]{mhasbk}. In the present manuscript, $\exp(-x^2/2)$, $f$, and  $V_n(f)$ take the place of $w$, $wf$ and $wv_n(f)$ in that reference. Part (b) follows easily from \eqref{eq:lebesgueconst} and the Riesz-Thorin interpolation theorem.
\begin{lemma}\label{lemma:goodapprox}
If $1\le p\le \infty$, $f\in L^p$, then\\
{\rm (a)}
\be\label{eq:goodapprox}
E_{p,n}\le \|f-V_n(f)\|_p \ls E_{p,n/2}(f).
\ee
and\\
{\rm (b)}
\be\label{eq:fourprojerr}
E_{p,n}\le \|f-s_n(f)\|_p \ls n^{1/2}E_{p,n}(f).
\ee
\end{lemma}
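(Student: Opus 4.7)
The plan is built around the classical near-best approximation trick. For part (a), the lower inequality $E_{p,n}(f)\le \|f-V_n(f)\|_p$ is immediate once I observe that $s_\ell(f)\in\Pi_\ell\subseteq\Pi_n$ for every $\ell\le n-1$, so that $V_n(f)\in\Pi_n$ and the infimum defining $E_{p,n}$ cannot exceed $\|f-V_n(f)\|_p$. For the upper inequality, the crucial algebraic fact is that $V_n$ reproduces $\Pi_{n'}$: if $P\in\Pi_{n'}$ then $\hat{P}(k)=0$ for every $k\ge n'$, so $s_\ell(P)=P$ for every $\ell\ge n'$, and therefore $V_n(P)=P$. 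Combining this with an operator-norm bound, for any $P\in\Pi_{n'}$,
\[
\|f-V_n(f)\|_p=\|(f-P)-V_n(f-P)\|_p\le \bigl(1+\|V_n\|_{p\to p}\bigr)\|f-P\|_p,
\]
and taking the infimum over $P\in\Pi_{n'}$ yields $\|f-V_n(f)\|_p\ls E_{p,n/2}(f)$ provided the operator norm is $O(1)$.

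The entire force of part (a) therefore reduces to establishing $\|V_n\|_{p\to p}\ls 1$ uniformly in $n$ and $p$, and this is the main obstacle. Such a uniform bound does not follow from \eqref{eq:lebesgueconst} alone, since that estimate yields only $\|s_\ell\|_{p\to p}\ls \ell^{1/2}$, and straight averaging of these bounds still produces an $n^{1/2}$ factor. One genuinely needs the cancellations present in the Ces\`aro-type averaged kernel
\[
V_n^{\mathrm{kern}}(x,y)=\frac{1}{n-n'}\sum_{\ell=n'}^{n-1}K_\ell(x,y),
\]
which can be rewritten via Abel summation in a form amenable to Christoffel--Darboux estimates for the Hermite kernel. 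Rather than reprove this delicate operator-norm estimate, I would quote it from \cite[Lemma~4.1.5]{mhasbk}, which is the result cited by the authors.

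For part (b), the same near-best approximation scheme applies, but now $s_n$ reproduces all of $\Pi_n$ (not just $\Pi_{n'}$), so for any $P\in\Pi_n$,
\[
\|f-s_n(f)\|_p\le \bigl(1+\|s_n\|_{p\to p}\bigr)\|f-P\|_p.
\]
The operator $s_n$ is the integral operator with kernel $K_n(x,y)$, which is symmetric in its two arguments. Hence \eqref{eq:lebesgueconst} gives both endpoint bounds
\[
\|s_n\|_{\infty\to\infty}=\sup_x\int|K_n(x,y)|\,dy\ls n^{1/2},\qquad \|s_n\|_{1\to 1}=\sup_y\int|K_n(x,y)|\,dx\ls n^{1/2},
\]
the second via the symmetry $K_n(x,y)=K_n(y,x)$. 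Riesz--Thorin interpolation then upgrades these two endpoint bounds to $\|s_n\|_{p\to p}\ls n^{1/2}$ for every $1\le p\le\infty$. Taking the infimum over $P\in\Pi_n$ delivers the claimed estimate $\|f-s_n(f)\|_p\ls n^{1/2}E_{p,n}(f)$, and the lower bound $E_{p,n}(f)\le\|f-s_n(f)\|_p$ is again automatic since $s_n(f)\in\Pi_n$. Overall, part (b) is essentially bookkeeping once \eqref{eq:lebesgueconst} is available, whereas part (a) hinges on the nontrivial uniform boundedness of $V_n$ cited above.
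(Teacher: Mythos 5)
Your proposal is correct and follows essentially the same route as the paper: part (a) ultimately rests on the de la Vall\'ee Poussin operator bound quoted from \cite[Lemma~4.1.5]{mhasbk} (you merely make explicit the standard reproduction-plus-operator-norm reduction that the citation encapsulates), and part (b) is exactly the paper's argument, combining the kernel bound \eqref{eq:lebesgueconst} at the endpoints $p=1,\infty$ (via symmetry of $K_n$) with Riesz--Thorin interpolation. Your remark that the $O(1)$ bound on $\|V_n\|_{p\to p}$ cannot be obtained by naively averaging the $\ell^{1/2}$ bounds on $\|s_\ell\|_{p\to p}$ correctly identifies why (a) is the nontrivial part.
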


With this preparation, we are now ready to prove Theorem~\ref{theo:fourdegapprox}.\\

\noindent\textsc{Proof of Theorem~\ref{theo:fourdegapprox}.} 
In view of \eqref{eq:leastsqexpression} and Lemma~\ref{lemma:kernelbds}, we see that for any $x\in\RR$,
$$
|S_n(\nu;f)(x)|=\left|\int_\RR f(y)K_n(\nu;x,y)d\nu(y)\right|\le \|f\|_\infty\int_\RR |K_n(\nu;x,y)|d|\nu|(y)\ls n^{1/2}\|f\|_\infty;
$$
i.e.,
$$
\|S_n(\nu;f)\|_\infty \ls n^{1/2}\|f\|_\infty.
$$
Since $S_n(\nu;f)\in\Pi_n$, 
Lemma~\ref{lemma:nikolskii}  leads to
\be\label{eq:pf2eqn1}
\|S_n(\nu;f)\|_1\ls n^{1/2}\|S_n(\nu;f)\|_\infty \ls n\|f\|_\infty.
\ee
Next, we observe that for any $P\in \Pi_n$,
\be\label{eq:reprod}
S_n(\nu;P)(x)=\int P(y)K_n(\nu;x,y)d\nu(y)=P(x).
\ee
Hence, using \eqref{eq:pf2eqn1}, we have for \emph{any} $P\in\Pi_n$,
\be\label{eq:pf2eqn2}
\|f-S_n(\nu;f)\|_1=\|f-P-S_n(\nu;f-P)\|_1\le \|f-P\|_1 +\|S_n(\nu;f-P)\|_1 \ls \|f-P\|_1 +n\|f-P\|_\infty.
\ee
We use this estimate with $V_n(f)$ in place of $P$ and use Lemma~\ref{lemma:goodapprox}(a) to arrive at \eqref{eq:fourdegapprox}.
If  we use $s_n(f)$ in place of $P$ in \eqref{eq:pf2eqn2} instead, then Lemma~\ref{lemma:goodapprox}(b) leads to \eqref{eq:fourprojapprox}.
\qed

Next, we turn to the proof of Theorem~\ref{theo:noiselessfour}, which again requires some further preparation.

The generating function for Hermite polynomials is given by (cf. \cite[Formula~(5.5.7)]{szego}))
\be\label{eq:generating_function}
\exp(2xw-w^2)=\pi^{1/4}\sum_{k=0}^\infty \frac{2^{k/2}}{\sqrt{k!}}h_k(x)w^k.
\ee
If $\lambda>0$, then using $w=-\lambda/2$ in \eqref{eq:generating_function}, we obtain
$$
\exp(-x\lambda)=\pi^{1/4}\exp(\lambda^2/4)\sum_{k=0}^\infty \frac{(-1)^k}{2^{k/2}\sqrt{k!}}h_k(x)\lambda^k,
$$
so that
\be\label{eq:modgenfn}
g_\lambda(x)=\exp(-x\lambda-x^2/2)=\pi^{1/4}\exp(\lambda^2/4)\sum_{k=0}^\infty \frac{(-1)^k}{2^{k/2}\sqrt{k!}}\psi_k(x)\lambda^k.
\ee
Next, we recall  that
\be\label{eq:hermitenorms}
\|\psi_k\|_1\ls k^{1/4}, \qquad \|\psi_k\|_\infty \ls k^{-1/4}.
\ee
We observe further using the Schwarz inequality that for any $x\in\RR$,
\be\label{eq:mockexp}
\sum_{k=0}^\infty\frac{x^k}{\sqrt{k!}}=\sum_{k=0}^\infty\frac{(\sqrt{2}x)^k}{\sqrt{k!}}2^{-k/2}\le \left\{\sum_{k=0}^\infty \frac{(\sqrt{2}x)^{2k}}{k!}\right\}^{1/2}\left\{\sum_{k=0}^\infty 2^{-k}\right\}^{1/2}=\exp(x^2).
\ee
Finally, we note that 
$$
\frac{(n+k-1)!}{(n-1)!}=k!\left(\gattha{n+k-1}{n-1}\right)\ge k!.
$$
Using \eqref{eq:hermitenorms}, \eqref{eq:mockexp}, and the above estimate, we deduce that
\be\label{eq:l1errest}
\begin{aligned}
\exp(\lambda^2/4)E_{1,n}(g_\lambda) &\ls \sum_{k=n}^\infty \frac{k^{1/4}}{2^{k/2}\sqrt{k!}}\lambda^k
\le \sum_{k=n}^\infty \frac{1}{k^{1/4}\sqrt{(k-1)!}}(\lambda/\sqrt{2})^k
\le n^{-1/4}\sum_{k=n}^\infty \frac{1}{\sqrt{(k-1)!}}(\lambda/\sqrt{2})^k\\
&\le \frac{(\lambda/\sqrt{2})^n}{n^{1/4}\sqrt{(n-1)!}}\sum_{k=0}^\infty \frac{1}{\sqrt{k!}}(\lambda/\sqrt{2})^k
\ls \frac{(\lambda/\sqrt{2})^n}{n^{1/4}\sqrt{(n-1)!}}\exp(\lambda^2/2).
\end{aligned}
\ee
We estimate $E_{\infty,n}(g_\lambda)$ similarly and conclude that
\be\label{eq:errest}
E_{1,n}(g_\lambda) \ls \frac{n^{1/4}}{\sqrt{n!}}(\lambda/\sqrt{2})^n\exp(\lambda^2/4), \qquad E_{\infty, n}(g_\lambda)\ls \frac{n^{-1/4}}{\sqrt{n!}}(\lambda/\sqrt{2})^n\exp(\lambda^2/4).
\ee
With this preparation, Theorem~\ref{theo:noiselessfour} is now easy to prove.\\

\textsc{Proof of Theorem~\ref{theo:noiselessfour}.}
We  apply Theorem~\ref{theo:fourdegapprox} with $f$ defined in \eqref{eq:completesq}, taking $\nu$ to be supported on $[-L,R]$ (i.e., $\mathsf{supp}(\nu)\subseteq [-L,R]$).
We note that in view of Remark~\ref{rem:relevance}, we may assume that $f$ is defined on the entire real axis rather than just on $[-L,R]$.
We note that
$$
f(t)=\sum_{k=1}^K a_k g_{\lambda_k}(t).
$$
Hence, Theorem~\ref{theo:noiselessfour} is a simple consequence of Theorem~\ref{theo:fourdegapprox} and the estimates \eqref{eq:errest}. \qed

The next objective of this section is to estimate $\int E(y)\hat{K}_n(w,y)d\nu(y)$.
We will use the following lemma, H\"offding's inequality (\cite[Theorem~2.8]{boucheron2013concentration}), several times. Note that this inequality quantifies the degree to which the sum of the indicated random variables can exceed a given value; it is an example of a concentration inequality.    
\begin{lemma}\label{lemma:hoeffding}
(\textbf{H\"offding's inequality})  Let $X_1,\cdots, X_M$ be i.i.d. random variables with mean $0$, and each $X_k\in [a_k,b_k]$. Then for any $t>0$,
\be\label{eq:hoeff}
\mathsf{Prob}\left\{\left|\sum_{k=1}^M X_k\right|\ge t\right\}\le 2\exp\left(-\frac{t^2}{\sum_{k=1}^M (b_k-a_k)^2}\right).
\ee
\end{lemma}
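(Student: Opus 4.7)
The plan is to prove the stated inequality via the classical Chernoff-bounding/exponential-Markov technique, which reduces the tail bound on a sum of independent bounded variables to a product of moment generating functions, each estimated by Hoeffding's lemma.

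First I would set $S=\sum_{k=1}^M X_k$, fix any $s>0$, and apply the exponential Markov inequality: $\mathsf{Prob}(S\ge t)=\mathsf{Prob}(e^{sS}\ge e^{st})\le e^{-st}\mathbb{E}[e^{sS}]$. Independence then gives $\mathbb{E}[e^{sS}]=\prod_{k=1}^M \mathbb{E}[e^{sX_k}]$, so the remaining task is to bound each one-variable moment generating function.

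The next step is Hoeffding's lemma: for any random variable $Y$ with $\mathbb{E}Y=0$ and $Y\in[a,b]$, one has $\mathbb{E}[e^{sY}]\le \exp(s^2(b-a)^2/8)$. I would prove this by convexity of $x\mapsto e^{sx}$ on $[a,b]$, yielding the chord bound $e^{sY}\le\frac{b-Y}{b-a}e^{sa}+\frac{Y-a}{b-a}e^{sb}$; taking expectations and using $\mathbb{E}Y=0$ gives an explicit upper bound depending only on $a,b,s$. Setting $\psi(s)=\log\!\bigl(\frac{b}{b-a}e^{sa}-\frac{a}{b-a}e^{sb}\bigr)$, a short calculation shows $\psi(0)=\psi'(0)=0$ and $\psi''(s)\le (b-a)^2/4$ (this uses the inequality $u(1-u)\le 1/4$ applied to a suitable weight), so Taylor's theorem yields $\psi(s)\le s^2(b-a)^2/8$. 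This is the main technical step.

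Applying Hoeffding's lemma to each $X_k$ and inserting into the Chernoff bound gives
\[
\mathsf{Prob}(S\ge t)\le \exp\!\Bigl(-st+\tfrac{s^2}{8}\sum_{k=1}^M (b_k-a_k)^2\Bigr).
\]
I would then optimize in $s$, taking $s=4t/\sum(b_k-a_k)^2$, which produces the one-sided bound $\mathsf{Prob}(S\ge t)\le\exp\!\bigl(-2t^2/\sum(b_k-a_k)^2\bigr)$. The same argument applied to $-S$ (whose summands $-X_k$ are also mean zero and lie in $[-b_k,-a_k]$, an interval of the same length) yields $\mathsf{Prob}(S\le -t)\le\exp\!\bigl(-2t^2/\sum(b_k-a_k)^2\bigr)$. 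A union bound on these two events gives the two-sided estimate with a factor of $2$ in front, and since $2t^2\ge t^2$ the bound in \eqref{eq:hoeff} follows (in fact with a stronger constant in the exponent than stated). The main obstacle is verifying the second-derivative estimate in Hoeffding's lemma; everything else is routine Chernoff-bound bookkeeping.
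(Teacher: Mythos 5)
Your proof is correct and is the standard Chernoff-bound/Hoeffding's-lemma argument; the optimization $s=4t/\sum(b_k-a_k)^2$ and the final union bound are all in order, and you rightly observe that you obtain the sharper exponent $-2t^2/\sum(b_k-a_k)^2$, of which the stated \eqref{eq:hoeff} is a weakening. The paper itself offers no proof of this lemma — it is quoted directly from the cited reference \cite[Theorem~2.8]{boucheron2013concentration} — so your argument simply supplies the standard derivation that the paper delegates to the literature.
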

We also need a covering lemma (see, e.g., \cite{cloninger2020cautious}).
\begin{lemma}\label{lemma:covering}
There exists a set $\C$ with at most $\sim n$ elements such that for any $P\in\Pi_n$,
\be\label{eq:coverning}
\|P\|_\infty \sim \max_{y\in \C}|P(y)|.
\ee
\end{lemma}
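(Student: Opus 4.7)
The plan is to build $\C$ as a uniform grid on a bounded interval where the sup-norm of any $P\in\Pi_n$ is essentially attained, and then use the Bernstein inequality to show that sampling on a grid of spacing $\sim 1/\sqrt{n}$ cannot lose more than a constant factor. The one inequality $\max_{y\in\C}|P(y)|\le \|P\|_\infty$ is trivial, so the content of \eqref{eq:coverning} is the reverse direction.

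First, Proposition~\ref{prop:mrs} reduces the problem to a bounded interval: $\|P\|_\infty \sim \|P\|_{[-2\sqrt n,2\sqrt n]}$. I would therefore take
\[
\C=\bigl\{-2\sqrt n + jC_*/\sqrt n : 0\le j\le J\bigr\}\cap[-2\sqrt n,2\sqrt n],
\]
where $C_*>0$ is a small constant to be chosen, and $J\sim n$ is chosen so the grid covers $[-2\sqrt n,2\sqrt n]$ with spacing $C_*/\sqrt n$. Clearly $|\C|\sim 4\sqrt n\cdot \sqrt n/C_*\sim n$.

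The key step is the interpolation argument. For any $P\in\Pi_n$ and any $x\in[-2\sqrt n,2\sqrt n]$, pick $y\in\C$ with $|x-y|\le C_*/\sqrt n$. By the mean value theorem and the Bernstein inequality \eqref{eq:bernstein} of Lemma~\ref{lemma:hermitelemma}(d) in the $L^\infty$ norm,
\[
|P(x)-P(y)|\le |x-y|\,\|P'\|_\infty \le \frac{C_*}{\sqrt n}\cdot c_B\sqrt n\,\|P\|_\infty = c_B C_*\,\|P\|_\infty.
\]
Choosing $C_*$ so that $c_B C_*\le 1/(2c_P)$, where $c_P$ is the implied constant from Proposition~\ref{prop:mrs} in the bound $\|P\|_\infty\le c_P\|P\|_{[-2\sqrt n,2\sqrt n]}$, I obtain
\[
|P(x)|\le \max_{y\in\C}|P(y)| + \frac{1}{2c_P}\|P\|_\infty \qquad \text{for every }x\in[-2\sqrt n,2\sqrt n].
\]
Taking the sup over $x$ on the left and invoking Proposition~\ref{prop:mrs} yields
\[
\|P\|_\infty\le c_P\|P\|_{[-2\sqrt n,2\sqrt n]} \le c_P\max_{y\in\C}|P(y)| + \tfrac12\|P\|_\infty,
\]
so $\|P\|_\infty\le 2c_P\max_{y\in\C}|P(y)|$, which is the desired bound.

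The main obstacle is only bookkeeping: the Bernstein inequality \eqref{eq:bernstein} is stated for weighted polynomials on all of $\RR$, not for polynomials on a fixed interval, so one must be careful that the absorption argument uses the $L^\infty(\RR)$ norm throughout — which is exactly why Proposition~\ref{prop:mrs} is invoked to pass between $\|P\|_\infty$ and $\|P\|_{[-2\sqrt n,2\sqrt n]}$ with constants independent of $n$. No other subtlety arises, and the constants $C_*$ and hence $|\C|\sim n$ depend only on the absolute constants in \eqref{eq:mrs} and \eqref{eq:bernstein}.
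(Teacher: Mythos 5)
Your argument is correct: the grid of mesh $\sim 1/\sqrt n$ on $[-2\sqrt n,2\sqrt n]$ gives $|\C|\sim n$, the Bernstein inequality \eqref{eq:bernstein} controls the oscillation between consecutive nodes by $c_BC_*\|P\|_\infty$, and the absorption via Proposition~\ref{prop:mrs} closes the estimate, with all constants independent of $n$. The paper does not prove this lemma but only cites it (to \cite{cloninger2020cautious}); your restricted-range-plus-Bernstein-plus-mesh argument is the standard route to such norming-set results and serves as a valid self-contained proof here.
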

With these preliminaries, we can now prove Theorem~\ref{theo:noisetheo}.\\

\noindent
\textsc{Proof of Theorem~\ref{theo:noisetheo}.}

We observe first that in view of \eqref{eq:mztotalvariation} and \eqref{eq:mrs},
\be\label{eq:pf4eqn1}
\left\|\int_{|y|\ge C\sqrt{m}}E(y)\hat{K}_n(\circ,y)d\nu(y)\right\|_\infty\ls \left\|\int_{|y|\ge C\sqrt{m}}|E(y)||\hat{K}_n(\circ,y)|d\nu(y)\right\|_\infty\ls Sm^{1/2}\exp(-cn).
\ee
Now, let $w\in\RR$ be fixed for the time being (until \eqref{eq:pf4eqn6} is proved).
 With $C_1$ as in Lemma~\ref{lemma:ballmeasure}, we divide the interval $[-C\sqrt{m}, C\sqrt{m}]$ into subintervals $I_k$, each of length at most $2C_1/\sqrt{m}$, so that $|\nu|(I_k) \ls m^{-1/2}$. 
 We denote the characteristic (indicator) function of $I_k$ by $\chi_k$. We then consider the random variables
\be\label{eq:pf4eqn2}
X_k=\int_{I_k} E(y)\hat{K}_n(w,y)d\nu(y)=\int E(y)\chi_k(y)\hat{K}_n(w,y)d\nu(y),
\ee
so that
\be\label{eq:pf4eqn3}
\int_{|y|\le Cm^{1/2}}E(y)\hat{K}_n(w,y)d\nu(y)=\sum_k X_k.
\ee
Since the expected value of each $E(y)$ is $0$, that of each $X_k$ is $0$ as well.
Further, for each $k$,
\be\label{eq:pf4eqn4}
|X_k|=\left|\int E(y)\chi_k(y)\hat{K}_n(w,y)d\nu(y)\right|\le S\int |\chi_k(y)\hat{K}_n(w,y)|d\nu(y).
\ee

Using the Schwarz inequality and Lemma~\ref{lemma:ballmeasure}, we see that
\be\label{eq:pf4eqn5}
\left(\int |\chi_k(y)\hat{K}_n(w,y)|d\nu(y)\right)^2\le \nu(I_k)\left\{\int_{I_k} |\hat{K}_n(w,y)|^2d\nu(y)\right\}\ls m^{-1/2}\left\{\int_{I_k} |\hat{K}_n(w,y)|^2d\nu(y)\right\};
\ee
i.e., with 
$$
r_k= S m^{-1/4}\left\{\int_{I_k} |\hat{K}_n(w,y)|^2d\nu(y)\right\}^{1/2},
$$
$$
|X_k|\ls r_k \qquad\mbox{for all } k.
$$
Using   \eqref{eq:fourchristbd}, we conclude that
\be\label{eq:pf4eqn8}
\sum_k r_k^2\ls S^2(n/m)^{1/2}.
\ee
So, H\"offding's inequality implies that
\be\label{eq:pf4eqn6}
\mathsf{Prob}\left\{\left|\int_{|y|\le Cm^{1/2}}E(y)\hat{K}_n(w,y)d\nu(y)\right|\ge t\right\}=\mathsf{Prob}\left\{\left|\sum_k X_k\right|\ge t\right\}\le 2\exp\left(-\frac{t^2}{S^2(n/m)^{1/2}}\right).
\ee
We note that this estimate holds for each $w\in\RR$, and $\sum_k X_k\in \Pi_n$. 
In view of the covering lemma, Lemma~\ref{lemma:covering}, this leads to
\be\label{eq:pf4eqn7}
\mathsf{Prob}\left\{\left\|\int_{|y|\le Cm^{1/2}}E(y)\hat{K}_n(\circ,y)d\nu(y)\right\|_\infty\ge t\right\}\ls n\exp\left(-\frac{ct^2}{S^2(n/m)^{1/2}}\right).
\ee
Choosing 
$$
t=cS(n/m)^{1/4}\sqrt{\log (n/\delta)},
$$
we deduce that with probability $>1-\delta$,
$$
\left\|\int_{|y|\le Cm^{1/2}}E(y)\hat{K}_n(\circ,y)d\nu(y)\right\|_\infty\ls S(n/m)^{1/4}\sqrt{\log (n/\delta)}.
$$
Together with \eqref{eq:pf4eqn1}, we obtain \eqref{eq:singleEest} with a judicious choice of $n$.
\qed

\section{Conclusions}
The analysis of multiexponential decaying signals is a longstanding problem in mathematics and physics. We have introduced a novel method for parameter estimation in this setting based on the fact that the Hermite polynomials are eigenfunctions of the Fourier transform. This enables us to avoid direct use of the notoriously ill-posed inverse Laplace transform for multiexponential analysis, although as indicated by our numerical results, the transformation to the Fourier domain retains the native ill-posedness of the original problem. Our numerical results, presented for the special case of biexponential decay, demonstrate the success of the method for a wide range of decay constants, including in regimes where the more traditional method of NLLS fails.  Again, the sensitivity of the method to noise serves as a sobering reminder of its genesis in the inverse Laplace transform. 

\appendix

\renewcommand{\theequation}{\Alph{section}.\hindu{equation}}

\section{Fourier-based estimation of the \texorpdfstring{$\lambda$}{lambda}'s} \label{sec:trigkernel}
\hspace*{1.4em} In this section, we summarize the approach to obtaining the values of $\lambda_k$ from the Fourier transform $\mathfrak{F}(\tilde{\mu})(\omega)$ as in \eqref{eq:fundarelation}. 

We consider a measure of the form
$$
\mu'=\sum_{k=1}^K b_k\delta_{\lambda_k}.
$$
When we sample $\mathfrak{F}(\mu')(\omega)$ at $\omega=\delta\ell$, $|\ell|<N$ for some $\delta$ and write $\omega_k=\lambda_k\delta$, we obtain the Fourier coefficients of a periodic measure $\mu$: 
\begin{equation}\label{eq:measurefour}
    \hat{\mu}(j)=\sum_{k=1}^K b_k\exp(-ij\omega_k), \qquad j=-N+1,\cdots, N-1.
\end{equation}
Of course, it is assumed that the points $\omega_k \in [0,2\pi]$.
In this section $\|\cdot\|$ will denote $|(\cdot)\mbox{ mod } 2\pi|$.
The material in this section is based on \cite{singdet,trigwave, loctrigwave}.

Clearly, the periodic measure $\mu$ can be recovered formally by its Fourier series, and one is tempted to use the partial sum to take into account that only finitely many Fourier coefficients are known. 
However, a judiciously filtered Fourier sum avoids sidelobes and yields greater accuracy. 
Accordingly, we introduce such a sum (see \eqref{eq:measuresigma}) using a kernel.
Let $h: \RR\to [0,1]$ be an infinitely differentiable, even function, such that $h(t)=1/2$ for $|t|\le 1/2$ and $h(t)=0$ for $|t|\ge 1$. We introduce the kernel 
\begin{equation}
   \label{eq:trigkernel}
\Phi_N(t)=\sum_{\ell\in\ZZ}h\left(\frac{|\ell|}{N}\right)\exp(i\ell t)= \sum_{|\ell|<N}h\left(\frac{|\ell|}{N}\right)\exp(i\ell t), \qquad t\in\RR, \ N>0.
\end{equation}
It is not difficult to see that 
\begin{equation}\label{eq:measuresigma}
   \sigma_N(\mu)(x)=\sum_{|\ell|<N} h\left(\frac{|\ell|}{N}\right)\hat{\mu}(\ell)\exp(i\ell x) =\sum_{k=1}^K a_k\Phi_N(x-\omega_k).
\end{equation}

Now, the kernel $\Phi_N$ has the localization property that 
\begin{equation}\label{eq:kernloc}
    |\Phi_N(t)|\lesssim \frac{\Phi_N(0)}{\max(1, (N\|t \|)^S)}, \qquad N\ge 1, \ t\in\RR.
\end{equation}

If $\eta=\min_{j\not=k}\|(\omega_j-\omega_k\|$, then it is clear that if $N\eta\ge 4$ and  $\|x-\omega_k\|\ge \eta/4$ for all $k$, then
\begin{equation}\label{eq:sigmafar}
    |\sigma_N(\mu)(x)|\lesssim \frac{\Phi_N(0)}{(N\eta)^S}\sum_{k=1}^K |a_k|.
\end{equation}
On the other hand, for any $x$ which does not satisfy the above condition, there is a unique $j$  such that $\|x-\omega_j\|<\eta/4$. 
We have 
$$
|\sigma_N(\omega_j)|\ge \Phi_N(0)|a_j|-c\frac{\Phi_N(0)}{(N\eta)^S}\sum_{k=1}^K |a_k|
$$
for a suitable constant $c>0$.
Using the Bernstein inequality, a concentration inequality related to Hoeffding's inequality that incorporates the variance of the summed random variables, for the trigonometric polynomials, it follows that for sufficiently large values of $N$, the prominent peaks of $|\sigma_N(x)|$ appear within $c_1/N$ of the values of the $\omega_j$'s.
We refer the reader to the above cited literature for further details.

%R
\bibliographystyle{abbrv}
\clearpage
\bibliography{FT_bib_Manuscript}
\end{document}